\newtheorem{theorem}{Theorem}[section]
\newtheorem{lemma}[theorem]{Lemma}
\newtheorem{definition}[theorem]{Definition}
\theoremstyle{remark}
\newtheorem{remark}[theorem]{Remark}
\newcommand{\e}{\varepsilon}		       
\newcommand{\R}{\mathbb{R}}
\newcommand{\T}{\mathbb{T}^3}
\newcommand{\N}{\mathbb{N}}
\newcommand{\dive}{\mathop{\mathrm {div}}}
\newcommand{\rrho}{\sqrt{\rho}}
\newcommand{\la}{\lambda}
\newcommand{\rn}{\rho_{\e}}
\newcommand{\rrn}{\sqrt{\rho_{\e}}}
\newcommand{\un}{u_{\e}}
\newcommand{\wn}{w_{{\e}}}
\newcommand{\bedn}{\beta_{\delta}^{l}(u_{\e})}
\newcommand{\bedm}{\beta_{\delta}^{l}(v_m)}
\newcommand{\bed}{\beta_{\delta}^{l}(u)}
\newcommand{\beln}{\bar{\beta}_{\lambda}(\rho_{\e})}
\newcommand{\bel}{\bar{\beta}_{\lambda}(\rho)}
\newcommand{\belr}{\bar{\beta}_{\lambda}(\overline{\rho}_{r})}
\newcommand{\belrho}{\bar{\beta}_{\lambda}(\rho)}
\newcommand{\belrp}{\bar{\beta}^{'}_{\lambda}(\overline{\rho}_{r})}
\newcommand{\belrhop}{\bar{\beta}^{'}_{\lambda}(\rho)}
\newcommand{\belnp}{\bar{\beta}^{'}_{\lambda}(\rho_{\e})}
\newcommand{\regrho}{\overline{\rho}_{r}}
\newcommand{\nablay}{\nabla_{y}}
\newcommand{\re}{\rho_{\e}}
\newcommand{\ue}{u_{\e}}
\newcommand{\rre}{\sqrt{\rho_\e}}
\newcommand{\bhmr}{\hat{\beta}_{\delta}(\overline{v_m}_r)}
\newcommand{\bhm}{\hat{\beta}_{\delta}(v_m)}
\newcommand{\bhu}{\hat{\beta}_{\delta}(u)}
\newcommand{\bhue}{\hat{\beta}_{\delta}(u_{\e})}
\newcommand{\weakto}{\rightharpoonup}
\newcommand{\weaktos}{\stackrel{*}{\rightharpoonup}}
\newcommand{\MT}{\mathcal{T}}
\numberwithin{equation}{section}
\subjclass[2010]{Primary: 35Q35, Secondary: 35D05, 76N10.}
\keywords{Compressible Fluids, Navier-Stokes-Korteweg, Capillarity, Vacuum, Existence.}
\begin{document}

\title[Navier-Stokes-Korteweg Equations]{Global existence of weak solutions to the Navier-Stokes-Korteweg equations}

\author[P. Antonelli]{Paolo Antonelli}
\address[P. Antonelli]{\newline
GSSI - Gran Sasso Science Institute \\ Viale Francesco Crispi 7\\67100, L'Aquila \\Italy}
\email[]{\href{paolo.antonelli@gssi.it}{paolo.antonelli@gssi.it}}

\author[S. Spirito]{Stefano Spirito}
\address[S. Spirito]{\newline
DISIM- Dipartimento di Ingegneria e Science dell'Informazione e Matematica \\ Via Vetoio\\67100, L'Aquila \\Italy}
\email[]{\href{stefano.spirito@univaq.it}{stefano.spirito@univaq.it}}

\begin{abstract}
In this paper we consider the Navier-Stokes-Korteweg equations for a viscous compressible fluid with capillarity effects in three space dimensions. We prove global existence of finite energy weak solutions for large initial data. Contrary to previous results regarding this system, vacuum regions are allowed in the definition of weak solutions and no additional damping terms are considered. The convergence of the approximating solutions is obtained by introducing suitable truncations in  the momentum equations of the velocity field and the mass density at different scales and use only the {\em a priori} bounds obtained by the energy and the BD entropy. Moreover, the approximating solutions enjoy only a limited amount of regularity, and the derivation of the truncations of the velocity and the density is performed by a suitable regularization procedure.
\end{abstract}

\maketitle

\section{Introduction}\label{sec:intro}
The aim of this paper is to prove global existence of finite energy weak solutions of the following Navier-Stokes-Korteweg system in $(0, T)\times\T$:
\begin{align}
&\partial_t\rho+\dive(\rho u)=0,\,\rho\geq 0,\label{eq:qns1}\\
&\partial_t(\rho u)+\dive(\rho u\otimes u)+\nabla\rho^{\gamma}-\dive(\rho Du)-\rho\nabla\Delta\rho=0,\label{eq:qns2}
\end{align}
with initial data 
\begin{equation}\label{eq:id}
\begin{aligned}
\rho(0,x)&=\rho^0(x),\\
(\rho u)(0,x)&=\rho^0(x)u^0(x).
\end{aligned}
\end{equation} 
Here, $\T$ denotes the three-dimensional flat torus, the function $\rho$ represents the density of the fluid and the vector $u$ is the velocity field.\par
The system \eqref{eq:qns1}-\eqref{eq:qns2} is a particular case of a more general class of equations which in their general form read
\begin{equation}\label{eq:nskgeneral}
\begin{aligned}
&\partial_t\rho+\dive(\rho u)=0\\
&\partial_t(\rho u)+\dive(\rho u\otimes u)+\nabla p=\dive\mathbb S+\dive\mathbb K,
\end{aligned}
\end{equation}
where $\mathbb S$ is the viscosity stress tensor given by
\begin{equation}\label{eq:visc}
\mathbb S=h(\rho)\,D u+g(\rho)\dive u\mathbb I,
\end{equation}
the coefficients $h$ and $g$ satisfying
\begin{equation*}
\begin{aligned}
&h\ge 0,\quad&h+3g\geq0,
\end{aligned}
\end{equation*}
 and the capillarity term $\mathbb K$ is determined by
\begin{equation}\label{eq:cap}
\dive\mathbb K=\nabla\left(\rho\dive(k(\rho)\nabla\rho)-\frac12(\rho k'(\rho)-k(\rho))|\nabla\rho|^2\right)-\dive(k(\rho)\nabla\rho\otimes\nabla\rho).
\end{equation}
Notice that system \eqref{eq:qns1}-\eqref{eq:qns2} is obtained from \eqref{eq:nskgeneral}-\eqref{eq:cap} by choosing  $k(\rho)=1$, $h(\rho)=\rho$ and $g(\rho)=0$. The tensor $\mathbb{K}$ is also called the Korteweg tensor and is derived rigorously from thermodynamic considerations by Dunn and Serrin in \cite{DS}. Systems of Korteweg type arise in modeling several physical phenomena, {\em e.g.} capillarity phenomena in fluids with diffuse interfaces, where the density experiences steep but still smooth changes of values. \par
Local existence of smooth solutions and global existence with small data for the system \eqref{eq:qns1}-\eqref{eq:qns2} have been proved in \cite{HL, HL1} by using a fixed point argument. 
Regarding the study of global weak solutions for the general Navier-Stokes-Korteweg systems only few results are available in the literature as in the analysis several mathematical difficulties appear. Besides the strong nonlinearity in the higher order derivatives determined by the Korteweg term, a crucial difficulty is that the viscosity coefficients $h$ and $g$ may vanish in the vacuum region and hence the velocity field and its gradient do not fulfil satisfactory a priori bounds. This is opposite to what happens for instance in the case with constant viscosities, when the velocity field satisfies suitable Sobolev bounds.
On the other hand, in the case when the coefficients $h, g$ satisfy
\begin{equation*}
\begin{aligned}
&g(\rho)=\rho h'(\rho)-h(\rho),\quad  
\end{aligned}
\end{equation*}
and the capillarity coefficient $\kappa(\rho)$ is chosen in a suitable way, it is possible to derive an additional a priori estimate, introduced by Bresch-Desjardins in \cite{BD} and called BD entropy, which yield further regularity properties on the density. By exploiting such a priori estimates, in \cite{BDL} the authors show a global existence result for weak solutions to \eqref{eq:qns1}-\eqref{eq:qns2}, where the test function are taken as $\rho\phi$, with $\phi$ smooth and compactly supported. This particular notion of weak solutions has the advantage to avoid some mathematical difficulties which arise in the definition of the velocity field in the vacuum region.
The result was later extended in \cite{J} to the case of Quantum-Navier-Stokes, namely when the capillarity coefficients is given by $k(\rho)=1/\rho$ in \eqref{eq:cap}. 
When the system \eqref{eq:qns1}-\eqref{eq:qns2} is augmented by a damping term in the equation for the momentum density, it is possible to prove the existence of global solutions by using the standard notion of weak solutions \cite{BD}. Indeed the presence of the damping term allows to define the velocity field almost everywhere in the domain.\par
When dealing with general finite energy weak solutions to \eqref{eq:qns1}-\eqref{eq:qns2}, a major mathematical difficulty arises in defining the velocity field in the vacuum region, due to the degeneracy of the viscosity coefficient $h(\rho)=\rho$. The momentum density is always well defined, but unfortunately the standard a priori estimates given by the physical energy (and by the BD entropy) do not avoid a possible concentration which would prevent the convergence of the convective term in the compactness argument. 
This is also the case when dealing with the barotropic Navier-Stokes system with degenerate viscosity, namely system \eqref{eq:qns1}-\eqref{eq:qns2} without the capillarity term. In \cite{MV} a further integrability estimate was inferred for the velocity field, yielding the compactness of weak solutions. However in the presence of a capillarity term it seems that a Mellet-Vasseur \cite{MV} type estimate fails. In the special case when the viscosity and capillarity coefficient satisfies the relation
 \begin{equation}\label{eq:relation}
 k(\rho)=\frac{h'(\rho)^2}{\rho},
 \end{equation}
 then it is possible to overcome this difficulty by considering an auxiliary system written in terms of an effective velocity field where the Korteweg tensor vanishes. 
Note that this relation plays a crucial role in the theory, see for example \cite{BGL} where the authors study the vanishing viscosity limit for the Quantum Navier-Stokes equations, or \cite{AS1} where \eqref{eq:relation} is extensively exploited to construct the approximating system and \cite{BCNV} where numerical methods are performed.

We stress that in  \eqref{eq:qns1}-\eqref{eq:qns2} the viscosity and capillarity coefficients do not satisfy the relation \eqref{eq:relation} and hence in this paper we cannot rely on a similar analysis.

 In order to overcome this difficulty and to prove our global existence result, we exploit a truncation argument, in the spirit of DiPerna-Lions \cite{DPL} for linear continuity equations, as it is also exploited in \cite{LV}.
 In our approach, we start by truncating the velocity field, but an additional truncation of the density is needed because of the lack of control on the third order term.  The two different truncations are performed at different scales, which at the end will be optimized to prove the convergence of both the third-order term and the convective term. The formal argument is also explained in \cite{AS3}, where it is showed the compactness of solutions of \eqref{eq:qns1}-\eqref{eq:qns2}. In this paper, we show the global existence of weak solutions, see Subsection \ref{subsec:main} and Theorem \ref{teo:main} for the statement of the main result and the necessary definitions. Here, the main difficulty is to rigorously justify at the level of the approximation the compactness argument. In this regard, we consider the following approximation 
\begin{equation}\label{eq:appintro}
\begin{aligned}
&\partial_t\re+\dive(\re\ue)=0,\\
&\partial_{t}(\re\ue)+\dive(\re\ue\otimes\ue)-\dive(\re\,D\ue)+\nabla\re^{\gamma}+\e\re|\ue|^2\ue+\e\ue\\&=\re\nabla\Delta\re+\e\re\nabla\left(\frac{\Delta\sqrt{\re}}{\sqrt{\re}}\right).
\end{aligned}
\end{equation}

Notice that following the argument in \cite{VY2} and \cite{BD} it is possible to prove global existence of weak solutions. Unfortunately, due to the limited amount of regularity, see Definition \ref{def:aws}, is not possible to justify the truncations of the velocity field $\ue$ and the density $\re$. In this regard, we perform suitable regularization of the weak solutions of \eqref{eq:appintro} which allows us to justify the formal argument in \cite{AS3}, see the proof of Theorem \ref{teo:exmain} for more details.  Roughly speaking, to gain regularity in the velocity we truncate it where is close to vacuum and we derive an equation for the regularized velocity. This produces several errors in the equation. In order to control the one involving the third-order we need a further truncation of the density at the infinity. We conclude by pointing out that it would be interesting to provide an approximating system as in \cite{AS1, LX}, as it would provide smooth approximating solutions for the system \eqref{eq:qns1}-\eqref{eq:qns2}. \par
Finally, we give a brief account of the state of art of the analysis of the Cauchy problem  for the general system \eqref{eq:nskgeneral}-\eqref{eq:cap}.  In the case  $\kappa=0$ \eqref{eq:nskgeneral} reduces to the system of compressible Navier-Stokes equations. When the viscosity coefficient $h(\rho)$ is chosen degenerating on the vacuum region $\{\rho=0\}$ the Lions-Feireisl theory, \cite{L}, \cite{F}, and the recent approach in \cite{BJ} cannot be used since they rely on the Sobolev bound of the velocity field.  The global existence of weak solutions has been proved independently in \cite{VY1} and \cite{LX} in the case $h(\rho)=\rho$ and $g(\rho)=0$. In both cases, non trivial approximation procedures are required to prove the BD entropy and the Mellet-Vasseur inequality. When the viscosity $\nu=0$, the system \eqref{eq:nskgeneral} is called Euler-Korteweg. In \cite{BGDD} local well-posedness has been proved, while in \cite{AH} the global existence of smooth solutions with small data has been proved. Moreover, when $k(\rho)=1/\rho$ the system \eqref{eq:nskgeneral} is called Quantum Hydrodynamic system (QHD) and arises for example in the description of quantum fluids. The global existence of finite energy weak solutions for the QHD system has been proved in \cite{AM, AM2} without restrictions on the regularity or the size of the initial data. Non uniqueness results by using convex integration methods has been proved in \cite{DFM}.
Relative entropy methods to study singular limits for the equations \eqref{eq:nskgeneral}-\eqref{eq:cap} have been exploited in \cite{BGL, DFM, GLT, DM1}, in particular we mention the incompressible limit in \cite{AHM} in the quantum case, the quasineutral limit \cite{DM} for the constant capillarity case and the vanishing viscosity limit in \cite{BGL}. 
  Finally, the analysis of the long time behaviour for the isothermal Quantum-Navier-Stokes equations has been performed in \cite{CCH}.

 \subsection*{Organization of the paper.}
The paper is organized as follows. In Section \ref{sec:pre} we fix the notations, give the precise definition of weak solutions of \eqref{eq:qns1}-\eqref{eq:qns2} and we recall some of the main tools used in the proofs. In Section \ref{sec:app} we give the definition of weak solutions of the approximating system and we prove the truncated formulation of the momentum equation. In the Section \ref{sec:main} we prove Theorem \ref{teo:main}.

\section{Preliminaries} \label{sec:pre}
\subsection{Notations}
Let $\T$ the three-dimensional torus $[0,1]^3$, the space of smooth functions with value in $\R^d$ compactly supported in the box $[0,1]^3$ and extended periodically will be $C^{\infty}_c((0,T)\times\T;\R^{d})$. We will denote with $L^{p}(\T)$ the standard Lebesgue spaces and with $\|\cdot\|_{L^p}$ their norm. The Sobolev space of functions with $k$ distributional derivatives in $L^{p}(\T)$ is $W^{k,p}(\T)$ and in the case $p=2$ we will write $H^{k}(\T)$. The spaces $W^{-k,p}(\T)$ and $H^{-k}(\T)$ denote the dual spaces of $W^{k,p'}(\T)$ and $H^{k}(\T)$ where  $p'$ is the H\"older conjugate of $p$. Given a Banach space $X$ we use the classical Bochner space for time dependent functions with value in $X$, namely $L^{p}(0,T;X)$, $W^{k,p}(0,T;X)$ and $W^{-k,p}(0,T;X)$ and when $X=L^p(\Omega)$, the norm of the space $L^{q}(0,T;L^{p}(\Omega))$ is denoted by $\|\cdot\|_{L^{q}_{t}L^{p}_{x}}$. Then, the space $C(0,T;X_w)$ is the space of continuous functions with value in the space $X$ endowed with the weak topology. The space of Radon measure defined by the duality with continuous compactly supported functions will be denoted by $\mathcal{M}(\T;\R^{d})$. Next, we denote by $Du=(\nabla u+(\nabla u)^T)/2$ the symmetric part of the gradient and by $Au=(\nabla u-(\nabla u)^T)/2$ the antisymmetric one. Given a matrix  $C\in\R^{3\times 3}$ we denote by $C^{s}$, the symmetric part of $C$ and by $C^{a}$ the antisymmetric one.  
\subsection{Definition of weak solutions and statement of the main result}\label{subsec:main}
The definition of weak solution for the system \eqref{eq:qns1}-\eqref{eq:qns2} is the following
 \begin{definition}\label{def:ws}
A triple $(\rho, u, \MT )$ with $\rho\geq0$ is said to be a weak solution of the Cauchy problem \eqref{eq:qns1}-\eqref{eq:qns2}-\eqref{eq:id} if the following conditions are satisfied. 
\begin{enumerate}
\item Integrability conditions.
\begin{align*}
&\rho\in L^{\infty}(0,T;H^{1}(\T))\cap L^{2}(0,T;H^2(\T)),
&\rrho\,u\in L^{\infty}(0,T;L^{2}(\T)),\\
&\rho^{\frac{\gamma}{2}}\in L^{\infty}(0,T;L^2(\T))\cap L^{2}(0,T;H^1(\T)),
&\nabla\rrho\in L^{\infty}(0,T;L^{2}(\T)).\\
&\MT\in L^{2}(0,T;L^{2}(\T)),&\rho\,u\in C([0,T);L^{\frac{3}{2}}_{w}(\T)).
\end{align*}
\item Continuity equation:\\
For any $\phi\in C_c^{\infty}([0,T)\times\T;\R)$
\begin{equation}\label{eq:wfce}
\int\rho^0\phi(0)\,dx+\iint\rho\phi_t+\rrho\rrho u\nabla\phi\,dxdt=0.
\end{equation}
\item Momentum equation:\\
For any fixed $l=1,2,3$ and $\psi\in  C_c^{\infty}([0,T)\times\T;\R^3)$
\begin{equation}\label{eq:wfme}
\begin{aligned}
&\int \rho^0u^{0,l}\psi(0)\,dx+\iint\rrho(\rrho u^{l})\psi_t\,dxdt+\iint\rrho u\rrho u^{l}:\nabla\psi\,dxdt\\
&-\iint\rrho\MT^{s}_{\cdot,l}\nabla\psi-2\iint\nabla\rho^{\frac{\gamma}{2}}\rho^{\frac{\gamma}{2}}\cdot\psi\,dxdt-\iint\nabla_{l}\rho\Delta\rho\psi\,dxdt\\
&-\iint\rho\Delta\rho\nabla_{l}\psi\,dxdt=0.
\end{aligned}
\end{equation}
\item Dissipation:\\
For any $\varphi\in C_c^{\infty}([0,T)\times\T;\R)$
\begin{equation}\label{eq:dissor}
\iint\rrho\MT_{i,j}\varphi\,dxdt=-\iint\rho u_{i}\nabla_{j}\varphi\,dxdt-\iint2\rrho u_i\otimes\nabla_{j}\rrho\,\varphi\,dxdt.
\end{equation}
 \item Energy Inequality:\\
There exist $\Lambda\in L^{\infty}_{t}(L^{2}_{x})$, with $\rho\,u=\rrho\Lambda$ a.e. in $(0,T)\times\T$, such that the following energy inequality hold
\begin{equation*}
\begin{aligned}
&\sup_{t\in(0,T)}\int_{\T}\frac{|\Lambda(t,x)|^2}{2}+\frac{\rho(t,x)^{\gamma}}{\gamma-1}+\frac{|\nabla\rho(t,x)|^2}{2}\,dx+\iint|\MT^{s}(t,x)|^2\,dxdt\\
&\leq\int_{\T}\rho^0(x)|u^0(x)|^2+\frac{{\rho^{0}(x)}^{\gamma}}{\gamma-1}+\frac{|\nabla\rho^0(x)|^2}{2}\,dx.
\end{aligned}
\end{equation*}
\end{enumerate}
\end{definition}
In order to state our main result, we first specify the assumptions on the initial data. We assume that \begin{equation}\label{eq:hyidr}
\begin{aligned}
&\rho^0\geq0,\quad&\rho^0\in L^{1}\cap L^{\gamma}(\T),\quad&\nabla\sqrt{\rho^0}\in L^{2}(\T),\quad\log\rho^0\in L^{1}(\T).\\
\end{aligned}
\end{equation}
We point out that the assumption on the summability of $\log\rho^0$ is made only to avoid the technicalities in approximating the initial data. Regarding the initial velocity, we assume that $u^0$ is a measurable vector field, finite almost everywhere such that \begin{equation}\label{eq:hyidu}
\begin{aligned}
&\sqrt{\rho^0}u^0\in L^{2}(\T),\quad
&\rho^0 u^0\in L^{p}(\R^3)\textrm{ with }p<2.\\ 
\end{aligned}
\end{equation}

The main theorem of this paper is the following. 
\begin{theorem}\label{teo:main}
Assume $\rho^0$ and $\rho^0\,u^0$ satisfy \eqref{eq:hyidr} and \eqref{eq:hyidu}. Then, there exist at least a weak solution $(\rho, u, \MT)$ of \eqref{eq:qns1}-\eqref{eq:id} in the sense of Definition \ref{def:ws}.
\end{theorem}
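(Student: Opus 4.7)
The plan follows the three-level scheme outlined in the introduction: construct weak solutions of the approximating system \eqref{eq:appintro}, extract $\e$-uniform bounds from the energy inequality and the BD entropy, and pass to the limit via a DiPerna--Lions type truncation argument acting simultaneously on velocity and density.

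\textbf{Approximation and uniform bounds.} Existence of a weak solution $(\re,\ue)$ of \eqref{eq:appintro} for each $\e>0$ would be obtained along the lines of \cite{VY2,BD}, where the added damping $\e\re|\ue|^2\ue+\e\ue$ and the augmented capillarity $\e\re\nabla(\Delta\rre/\rre)$ provide the regularization needed to close both energy and BD estimates. These yield, uniformly in $\e$, the bounds $\rre\ue\in L^\infty_tL^2_x$, $\rre D\ue\in L^2_{t,x}$, $\re^{\gamma/2}\in L^\infty_tL^2_x\cap L^2_tH^1_x$, $\nabla\rre\in L^\infty_tL^2_x$, $\re\in L^\infty_tH^1_x\cap L^2_tH^2_x$, together with the $\e$-vanishing controls $\e^{1/4}\re^{1/4}\ue\in L^4_{t,x}$, $\sqrt{\e}\,\ue\in L^2_{t,x}$ and $\sqrt{\e}\,\Delta\rre\in L^2_{t,x}$. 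An Aubin--Lions argument on the continuity equation then gives $\re\to\rho$ strongly, and the BD bound upgrades this to $\nabla\rre\to\nabla\rrho$ in $L^2_{t,x}$, so the pressure and the purely density-dependent part of the Korteweg term pass to the limit immediately, leaving the convective, viscous and third-order capillarity terms to be handled.

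\textbf{Double truncation.} Following \cite{AS3}, I would introduce two truncations at different scales: $\bedi(\ue)$ truncating $\ue$ at scale $\delta$ (to neutralize the concentration of $\rre\ue$ near vacuum) and $\bar\beta_\lambda(\re)$ truncating $\re$ at scale $\lambda$ (to provide a pointwise bound on the density against which to pair the third-order term). Since weak solutions of \eqref{eq:appintro} carry only the regularity given by the energy and BD estimates, the chain rule cannot be applied directly; instead I would regularize $\ue$ and $\re$ in space, derive a truncated momentum equation for the regularized fields, and control the resulting DiPerna--Lions commutators using $\nabla\rre\in L^\infty_tL^2_x$ and $\re\in L^2_tH^2_x$. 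Sending the regularization parameter to zero yields, at fixed $\e,\delta,\lambda$, an equation for $\bedi(\ue)$ weighted by $\bar\beta_\lambda(\re)$. Passing $\e\to0$ in this truncated equation produces strong compactness of the truncated momentum on $\{\re\le\lambda,\ |\ue|\lesssim\delta^{-1}\}$, enough to identify the weak limit of $\rre\ue\otimes\rre\ue$ and to recover the defect measure $\MT$ in \eqref{eq:dissor}. An optimized double limit $\delta\to 0$, $\lambda\to\infty$, in which the scales are tuned so that both the truncation residual and the commutator errors vanish, then recovers the full momentum equation \eqref{eq:wfme}; the energy inequality survives by weak lower semicontinuity.

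\textbf{Main obstacle.} The delicate point is the rigorous justification of the renormalization step in the presence of the third-order Korteweg term. Unlike the convective contribution (which is indirectly tamed by the damping of \eqref{eq:appintro} playing the role of the Mellet--Vasseur integrability), $\nabla\Delta\re$ only lives in a negative-order space and must therefore be multiplied by a uniformly bounded factor; this is precisely what forces the second truncation $\bar\beta_\lambda(\re)$. The truncation in turn creates additional error terms in the renormalized equation, coupling the commutators coming from the velocity regularization with products involving $\bar\beta_\lambda^{\prime}(\re)\nabla\re$ and $\Delta\re$. Keeping these errors bounded uniformly in $\e$, and arranging that they vanish when $\delta\to 0$ and $\lambda\to\infty$ are taken in the appropriate order, is the technical core on which the whole argument rests.
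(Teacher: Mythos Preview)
Your overall scheme coincides with the paper's: existence for the approximating system \eqref{eq:app} via \cite{VY2,BD}, $\e$-uniform bounds from the energy and BD entropy, the double truncation $\beta^l_\delta(\ue)\,\bar\beta_\lambda(\re)$, and passage to the limit by tuning the two scales. The one place where your sketch would actually fail is the regularization step. You write that you would ``regularize $\ue$ and $\re$ in space'' and control the DiPerna--Lions commutators using only $\nabla\rre\in L^\infty_tL^2_x$ and $\re\in L^2_tH^2_x$. But the commutator in the transport part of the momentum equation pairs the mollified quantity against $\partial_t\re$ and $\nabla(\re\ue)$, both only in $L^{4/3}_{t,x}$ at fixed $\e$; since $\ue$ itself is merely in $L^2_{t,x}$ (with an $\e$-dependent bound) and has no gradient control, the hypotheses of Lemma~\ref{lem:DPL} are not met ($\tfrac34+\tfrac12>1$). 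The paper (following \cite{LV}) inserts an intermediate cutoff $\phi_m(\re)$, supported on $\{1/(2m)\le\re\le 2m\}$, and works with $v_m:=\phi_m(\re)\ue$, for which $v_m\in L^4_{t,x}$ and $\nabla v_m\in L^2_{t,x}$; these are precisely the exponents that close the commutator. The renormalized equation is first derived for $\beta^l_\delta(v_m)$, the density truncation $\bar\beta_\lambda$ is introduced, and only then is $m\to\infty$ taken, using the crucial fact that $\rre\nabla v_m\to\MT_\e$ strongly in $L^2$.

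Two smaller corrections. In the paper's normalization $\bar\beta_\lambda(\rho)=\bar\beta(\lambda\rho)$, so the density cutoff is removed as $\lambda\to 0$, not $\lambda\to\infty$; the remainder satisfies $\|R^{\delta,\lambda}_\e\|_{L^1}\lesssim \delta/\sqrt{\lambda}+\lambda/\delta+\lambda+\delta$ uniformly in $\e$, and one sets $\delta=\lambda^\alpha$ with $\alpha\in(1/2,1)$ before letting $\lambda\to0$. Also, the paper does not obtain strong convergence of $\nabla\rre$ in $L^2$ (only weak); what is actually proved and used is $\re\to\rho$ strongly in $L^2(0,T;H^1)$, together with the pointwise convergences of $\re$ and $\re\ue$ from Aubin--Lions, which drive Lemma~\ref{lem:c2} and hence the identification of all limits in the truncated momentum equation.
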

\begin{remark}
We stress that the velocity field is not uniquely defined on the vacuum region $\{\rho=0\}$. Moreover, we are not able to deduce that $\Lambda=\rrho u$ in Definition \ref{def:ws}, since we do not know if $\Lambda=0$ on $\{\rho=0\}$. 
\end{remark}
\subsection{The Truncations}\label{subsec:truncations}
Let $\bar{\beta}:\R\to\R$ be an even positive compactly supported smooth function such that 
\begin{equation*}
\bar{\beta}(z)=1\textrm{ for }z\in[-1,1],
\end{equation*}
$\mbox{supp}\,\bar{\beta}\subset (-2,2)$ and $0\leq \bar{\beta}\leq 1$. Given $\bar{\beta}$, we define 
$\tilde{\beta}:\R\to\R$ as follows: 
\begin{equation*}
\tilde{\beta}(z)=\int_{0}^{z}\bar{\beta}(s)\,ds. 
\end{equation*}
For $y\in\R^{3}$ we define for any $\delta>0$ the functions 
\begin{equation*}
\begin{aligned}
&\beta_{\delta}^{1}(y):=\frac{1}{\delta}\tilde{\beta}(\delta\, y_1)\bar{\beta}(\delta\, y_2)\bar{\beta}(\delta\, y_3),\\
&\beta_{\delta}^{2}(y):=\frac{1}{\delta}\bar{\beta}(\delta\, y_1)\tilde{\beta}(\delta\, y_2)\bar{\beta}(\delta\, y_3),\\
&\beta_{\delta}^{3}(y):=\frac{1}{\delta}\bar{\beta}(\delta\, y_1)\bar{\beta}(\delta\, y_2)\tilde{\beta}(\delta\, y_3).
\end{aligned}
\end{equation*}
Note that for fixed $l=1,2,3$ the function $\beta_{\delta}^{l}:\R^{3}\to\R$ is a truncation of the function $f(y)=y_l$. 
Finally, for any $\delta>0$  we define $\hat{\beta}_{\delta}:\R^{3}\to\R$ as
\begin{equation*}
\hat{\beta}_{\delta}(y):=\bar{\beta}(\delta\,y_1)\bar{\beta}(\delta\,y_2)\bar{\beta}(\delta\,y_3),
\end{equation*}
and for any $\la>0$ we define $\bar{\beta}_{\lambda}:\R\to\R$ as
$$\bar{\beta}_{\lambda}(s)=\bar{\beta}(\lambda\,s).$$
In the next Lemma we collect some of the main properties of $\beta_{\delta}^{l}$, $\hat{\beta}_{\delta}$ and $\bar{\beta}_{\lambda}$. Those properties are elementary and can be deduced directly from the definitions.
\begin{lemma}\label{lem:trunc}
Let $\lambda,\,\delta>0$ and $K:=\|\bar{\beta}\|_{W^{2,\infty}}$. Then, there exists $C=C(K)$ such that the following bounds hold. 
\begin{enumerate}
\item For any $\delta>0$ and $l=1,2,3$
\begin{equation}\label{eq:bed}
\begin{aligned}
&\|\beta^{l}_{\delta}\|_{L^{\infty}}\leq \frac{C}{\delta},\quad&\|\nabla\beta^{l}_{\delta}\|_{L^{\infty}}\leq{C},\quad
&\|\nabla^{2}\beta^{l}_{\delta}\|_{L^{\infty}}\leq C\,\delta,
\end{aligned}
\end{equation}
\item For any $\la>0$
\begin{equation}\label{eq:bel}
\begin{aligned}
&\|\bar{\beta}_{\la}\|_{L^{\infty}}\leq 1,\quad&\|\bar{\beta}'_{\la}\|_{L^{\infty}}\leq C\,\la,\quad&\sqrt{|s|}\bar{\beta}_{\la}(s)\leq \frac{C}{\sqrt{\la}}.
\end{aligned}
\end{equation}
\item For any $\delta>0$
\begin{equation}\label{eq:ybed}
\begin{aligned}
&&\|\hat{\beta}_{\delta}\|_{L^{\infty}}\leq 1,\quad&\|\nabla\hat{\beta}_{\delta}\|_{L^{\infty}}\leq{C\delta},\quad&|y||\hat{\beta}_{\delta}(y)|\leq \frac{C}{\delta},
\end{aligned}
\end{equation}
\item The following convergences hold for $l=1,2,3$, pointwise on $\R^{3}$, as $\delta\to 0$
\begin{equation}\label{eq:bedc}
\begin{aligned}
&\beta_{\delta}^{l}(y)\to y_{l},\quad
&(\nabla_{y}\beta_{\delta}^{l})(y)\to \nabla_{y_{l}} y,\quad
&\hat{\beta}_{\delta}(y)\to1.
\end{aligned}
\end{equation}
\item The following convergence holds pointwise on $\R$ as $\la\to 0$
\begin{equation}\label{eq:belc}
\begin{aligned}
&\bar{\beta}_{\la}(s)\to 1.
\end{aligned}
\end{equation}
\end{enumerate}
\end{lemma}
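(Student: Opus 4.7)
The plan is to verify each estimate and pointwise limit by direct computation using only the elementary properties of $\bar\beta$ (support in $(-2,2)$, values in $[0,1]$, identically $1$ on $[-1,1]$) and of its antiderivative $\tilde\beta$. I will use throughout three basic facts: (a) $\tilde\beta$ is uniformly bounded on $\R$ by $\|\bar\beta\|_{L^1}$, which is finite because $\bar\beta$ has compact support; (b) $\tilde\beta'=\bar\beta$, so in particular $\tilde\beta(0)=0$, $\tilde\beta'(0)=\bar\beta(0)=1$, and $\bar\beta'(0)=0$; (c) each application of $\nabla_y$ produces one factor of $\delta$ (or $\lambda$) via the chain rule, while the argument $\delta y$ confines the support of the truncations to the cube $\{|y_j|\leq 2/\delta\}$.

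For (1), fix $l=1$ for concreteness and write $\beta_\delta^1(y)=\delta^{-1}\tilde\beta(\delta y_1)\bar\beta(\delta y_2)\bar\beta(\delta y_3)$. The $L^\infty$ bound is immediate since one multiplies $\delta^{-1}$ by three uniformly bounded factors. Differentiating in the $y_1$ direction yields $\bar\beta(\delta y_1)\bar\beta(\delta y_2)\bar\beta(\delta y_3)$, while transverse partials yield expressions of the form $\tilde\beta(\delta y_1)\bar\beta'(\delta y_j)\bar\beta(\delta y_k)$; both are $O(1)$ because the $\delta$ produced by the chain rule cancels the $\delta^{-1}$ prefactor. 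A second differentiation brings in an additional explicit factor of $\delta$, giving $\|\nabla^2\beta_\delta^l\|_{L^\infty}\leq C\delta$. The estimates in (3) for $\hat\beta_\delta$ follow the same scheme and are even simpler, with $|y||\hat\beta_\delta(y)|\leq C/\delta$ coming directly from the support inclusion $\mathrm{supp}\,\hat\beta_\delta\subset\{|y_j|\leq 2/\delta\}$.

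Part (2) is direct: $\|\bar\beta_\lambda\|_{L^\infty}\leq\|\bar\beta\|_{L^\infty}\leq 1$; the chain rule gives $\bar\beta'_\lambda(s)=\lambda\bar\beta'(\lambda s)$, whence $\|\bar\beta'_\lambda\|_{L^\infty}\leq K\lambda$; and since $\bar\beta_\lambda(s)$ vanishes for $|s|>2/\lambda$, one has $\sqrt{|s|}\,\bar\beta_\lambda(s)\leq\sqrt{2/\lambda}$ pointwise.

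Finally, (4) and (5) are continuity statements. As $\lambda\to 0$, $\bar\beta_\lambda(s)=\bar\beta(\lambda s)\to\bar\beta(0)=1$ by continuity, proving (5). For (4) the convergence $\hat\beta_\delta(y)\to 1$ is analogous, while $\beta_\delta^l(y)\to y_l$ follows from the Taylor expansion $\delta^{-1}\tilde\beta(\delta y_l)=\tilde\beta'(0)y_l+o(1)=y_l+o(1)$ combined with convergence of the two $\bar\beta$ factors to $1$. For the gradient, I distinguish the diagonal partial, where the $\delta^{-1}$ prefactor and the $\delta$ from the chain rule cancel to leave $\bar\beta(\delta y_l)\bar\beta(\delta y_j)\bar\beta(\delta y_k)\to 1$, from the transverse partials, where the factor $\tilde\beta(\delta y_l)\to\tilde\beta(0)=0$ forces the limit to be $0$; this matches $\nabla_{y_j}y_l$, which equals $1$ when $j=l$ and $0$ otherwise. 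No step presents a genuine obstacle; the argument is pure bookkeeping of the powers of $\delta$ produced by the chain rule.
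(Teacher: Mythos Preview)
Your proof is correct and proceeds exactly along the lines the paper indicates: the paper does not actually prove this lemma but merely states that ``those properties are elementary and can be deduced directly from the definitions,'' and your argument is precisely that direct verification via the chain rule and the support properties of $\bar\beta$.
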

\subsection{DiPerna-Lions Commutator Estimate}
In this Subsection we recall the commutator estimate for convolutions of DiPerna-Lions \cite{DPL}. 
First, for any function $f$ we denote by $\overline{f}_{r}$ the time-space convolution of $f$ with a smooth sequence of even mollifiers $\{\Psi_{r}\}_{r}$, namely 
\begin{equation*}
\overline{f}_{r}=\Psi_r*f(t,x),\quad t>r, 
\end{equation*}
where 
\begin{equation*}
\Psi_r(t,x)=\frac{1}{r^{4}}\Psi(\frac{t}{r},\frac{x}{r})
\end{equation*}
and $\Psi$ is a smooth nonnegative even function such that $\mbox{supp}\Psi\subset B_1(0)$ and 
$$
\iint\Psi\,dxdt=1.$$
Then, the following lemma hold true. 
\begin{lemma}\label{lem:DPL}
Let $p_1,\,p_2\in [1,\infty]$ and $p_3<\infty$.   
\begin{enumerate}
\item Let $B\in L^{p_1}((0,T)\times\T;\R^3)$ such that $\nabla\,B\in L^{p_1}((0,T)\times\T;\R)$ and let $f\in L^{p_2}((0,T)\times\T;\R)$, then 
\begin{equation*}
\begin{aligned}
\|\dive\overline{(B\,f)_r}-\dive(B\overline{f}_{r})\|_{L^{p_3}_{t,x}}\to 0\mbox{ as }r\to0
\end{aligned}
\end{equation*}
provided $\frac{1}{p_3}=\frac{1}{p_1}+\frac{1}{p_2}$. 
\item Let $g\in L^{p_1}((0,T)\times\T;\R)$ such that $\partial_t\,g\in L^{p_1}((0,T)\times\T;\R)$ and let $f\in L^{p_2}((0,T)\times\T;\R)$, then 
\begin{equation*}
\begin{aligned}
\|\overline{\partial_t(g\,f)_r}-\partial_t(g\overline{f}_{r})\|_{L^{p_3}_{t,x}}\to 0\mbox{ as }r\to0, 
\end{aligned}
\end{equation*}
provided $\frac{1}{p_3}=\frac{1}{p_1}+\frac{1}{p_2}$. 
\end{enumerate}
\end{lemma}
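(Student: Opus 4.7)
The plan is to follow the classical DiPerna--Lions commutator argument \cite{DPL} adapted to the space-time mollifier $\Psi_r$. Both parts of the lemma have the same structural form, so I treat part (1) in detail; part (2) follows by interchanging the roles of $\dive(B\,\cdot)$ and $\partial_t(g\,\cdot)$ and integrating by parts in the time variable instead.

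Set $R_r(t,x) := \dive\overline{(Bf)}_r - \dive(B\,\overline{f}_r)$. Since the mollifier $\Psi_r$ commutes with the spatial divergence, Leibniz rule after adding and subtracting $B(t,x)$ inside the convolution integral yields
\[
R_r = \bigl[\overline{(\dive B)\,f}_r - (\dive B)\,\overline{f}_r\bigr] + \iint \Psi_r(s,y)\bigl[B(t-s,x-y) - B(t,x)\bigr] \cdot \nabla f(t-s,x-y)\,dy\,ds.
\]
The first bracket is the standard commutator between pointwise multiplication by $\dive B\in L^{p_1}$ and mollification of $f\in L^{p_2}$; it tends to zero in $L^{p_3}$ by density of smooth functions and the uniform $L^{p_3}$-boundedness of the operator. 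For the second piece, I would integrate by parts in $y$, using $\nabla_x f(t-s,x-y) = -\nabla_y f(t-s,x-y)$, to shift $\nabla f$ off of $f$. This produces a main term of the form $\iint \nabla_y\Psi_r(s,y)\cdot[B(t-s,x-y)-B(t,x)]\,f(t-s,x-y)\,dyds$ plus a correction involving $\overline{(\dive B)f}_r$ that combines cleanly with the first bracket.

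The heart of the proof is then to control and identify the limit of the remaining integral. Rescaling $s=r\sigma$, $y=r\eta$ converts $\nabla_y\Psi_r$ into $r^{-1}(\nabla\Psi)$ times the Jacobian, thereby exposing the difference quotient $r^{-1}[B(t-r\sigma,x-r\eta)-B(t,x)]$. Splitting this into a spatial and a temporal difference, the spatial one is controlled by the Hardy-type bound
\[
|B(t-r\sigma,x-r\eta) - B(t-r\sigma,x)| \le r|\eta|\int_0^1 |\nabla B(t-r\sigma, x-\tau r\eta)|\,d\tau,
\]
so $\nabla B\in L^{p_1}$ suffices; the temporal difference is not controlled directly (no hypothesis on $\partial_t B$), but it can be absorbed by using the vanishing-mean identity $\int \nabla_\eta\Psi(\sigma,\eta)\,d\eta = 0$ together with $L^{p_1}$-continuity of translations in $t$. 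Young's and H\"older's inequalities then yield the uniform bound $\|R_r\|_{L^{p_3}_{t,x}} \le C\bigl(\|\nabla B\|_{L^{p_1}} + \|\dive B\|_{L^{p_1}}\bigr)\|f\|_{L^{p_2}}$.

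With the uniform bound in hand, I would conclude by a standard density argument: for smooth $B_n\to B$ with $\nabla B_n\to\nabla B$ in $L^{p_1}$ and smooth $f_n\to f$ in $L^{p_2}$, the commutator $R_r^{(n)}$ tends to $0$ as $r\to 0$ by Taylor expansion and dominated convergence (on smooth data the rescaled integrand converges pointwise to $\eta\cdot\nabla B(t,x)\cdot\nabla_\eta\Psi\,f(t,x)$, whose $\eta$-integral cancels the $-(\dive B)\overline{f}_r$ piece). The uniform operator bound then transfers this limit to the original $B$ and $f$. Part (2) goes through identically with $\partial_t$ in place of $\dive$, rescaling in the time variable, and the hypothesis $\partial_t g\in L^{p_1}$ replacing $\nabla B\in L^{p_1}$. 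I expect the main technical obstacle to be precisely the step highlighted above: in part (1) only the spatial gradient of $B$ is assumed, so the space-time mollification forces one to exploit the zero-mean property $\int \nabla_\eta\Psi\,d\eta = 0$ and $L^{p_1}$-time-translation continuity to cope with the uncontrolled time difference of $B$.
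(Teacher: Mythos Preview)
Your sketch is essentially the classical DiPerna--Lions commutator argument, which is exactly what the paper has in mind: the authors do not give a proof at all but simply write ``We omit the proof of the Lemma. Notice that part 1) can be easily deduced from \cite[Lemma~II.1]{DPL} and part 2) is a simple corollary.'' So there is nothing to compare at the level of strategy; you have supplied the details the paper left to the reader.

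One remark on the technical point you flag. Your concern about the temporal difference $r^{-1}[B(t-r\sigma,x)-B(t,x)]$ in part (1) is well placed: with a genuine space--time mollifier and only $\nabla B\in L^{p_1}$ assumed, this piece is \emph{not} uniformly bounded in $r$ by itself, and the zero-mean identity $\int\nabla_\eta\Psi(\sigma,\eta)\,d\eta=0$ does not kill it outright because the factor $f(t-r\sigma,x-r\eta)$ still depends on $\eta$. The clean way to close the argument is to run the density approximation on $f$ rather than on $B$: for smooth $f$ the inner $\eta$-integral $\int\nabla_\eta\Psi(\sigma,\eta)\,f(t-r\sigma,x-r\eta)\,d\eta$ gains a factor $r$ after integrating by parts in $\eta$, which cancels the $r^{-1}$ and leaves a term controlled by $\|B\|_{L^{p_1}}\|\nabla f\|_{L^\infty}$; the uniform-in-$r$ bound on the \emph{spatial} piece of the commutator (which does follow from $\nabla B\in L^{p_1}$ via the Hardy-type estimate you wrote) then lets you pass to general $f\in L^{p_2}$. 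This is essentially what you outline, but it is worth being explicit that the uniform operator bound you need holds for the spatial part, while the temporal part is handled only after restricting to smooth $f$ in the density step.
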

We omit the proof of the Lemma. Notice that part 1) can be easily deduced from \cite[Lemma II.1]{DPL} and part 2) is a simple corollary. 
%Indeed, we recall that the following commutator estimates hold true: Given $\partial_t\,g\in L^{p}_{t,x}$ and $h\in L^{q}_{t,x}$, then 
%\begin{equation*}
%\begin{aligned}
%\overline{\partial_t(g\,h)_r}-\partial_t(g\overline{h}_{r})\to 0\mbox{ strongly in }L^{r}_{t,x}\mbox{ with }\frac{1}{r}=\frac{1}{p}+\frac{1}{q}. 
%\end{aligned}
%\end{equation*}
%Given $\nabla\,g\in L^{p}_{t,x}$ and $h\in L^{q}_{t,x}$, then 
%\begin{equation*}
%\begin{aligned}
%\dive\overline{(g\,h)_r}-\dive(g\overline{h}_{r})\to 0\mbox{ strongly in }L^{r}_{t,x}\mbox{ with }\frac{1}{r}=\frac{1}{p}+\frac{1}{q}. 
%\end{aligned}
%\end{equation*}

\section{Weak solutions of approximating system and their properties}\label{sec:app}
We will construct weak solutions of \eqref{eq:qns1}-\eqref{eq:qns2} as limit of weak solutions of the following system:
\begin{equation}\label{eq:app}
\begin{aligned}
&\partial_t\re+\dive(\re\ue)=0,\\
&\partial_{t}(\re\ue)+\dive(\re\ue\otimes\ue)-\dive(\re\,D\ue)+\nabla\re^{\frac{\gamma}{2}}+\e\re|\ue|^2\ue+\e\ue\\&=\re\nabla\Delta\re+\e\re\nabla\left(\frac{\Delta\sqrt{\re}}{\sqrt{\re}}\right),
\end{aligned}
\end{equation}
with initial datum 
\begin{equation}\label{eq:idapp}
\begin{aligned}
\rn(0,x)&=\rho^0(x),\\
(\rn\un)(0,x)&=\rho^0(x)u^0(x),
\end{aligned}
\end{equation} 
satisfying the hypothesis \eqref{eq:hyidr} and \eqref{eq:hyidu}. The definition of weak solutions of the system \eqref{eq:app} is the following. 
\begin{definition}\label{def:aws}
A triple $(\rho_{\e}, u_{\e}, \MT_{\e})$ is a weak solution of \eqref{eq:app}-\eqref{eq:idapp} provided the following properties hold. 
\begin{enumerate}
\item Integrability Hypothesis:
\begin{align*}
&\re\in L^{\infty}(0,T;H^{1}(\T))\cap L^{2}(0,T;H^2(\T)),
&\re\,\ue\in L^{\infty}(0,T;L^{2}(\T)),\\
&\re^{\frac{\gamma}{2}}\in L^{\infty}(0,T;L^2(\T))\cap L^{2}(0,T;H^1(\T)),
&\MT_{\e}\in L^{\infty}(0,T;L^{2}(\T)),\\
&\rre\in L^{\infty}(0,T;H^{1}(\T))\cap L^{2}(0,T;H^2(\T)),&\re^{\frac{1}{4}}\ue\in L^{4}((0,T)\times(\T)),\\
&\ue\in L^{2}((0,T)\times(\T)),&\rn\ue\in C([0,T);L^{\frac{3}{2}}_{w}(\T)). 
\end{align*}
\item Continuity equation:\\
For any $\phi\in C^{\infty}_{c}([0,T)\times\T;\R)$
\begin{equation}\label{eq:exce}
\iint\re\partial_t\phi+\re\ue\nabla\phi\,dxdt+\int\rho^{0}\phi(0)\,dx=0.
\end{equation}
\item Momentum equation:\\
For any $\psi\in C^{\infty}_{c}([0,T)\times\T;\R^{3})$
\begin{equation}\label{eq:exme}
\begin{aligned}
&\iint\re\ue\partial_t\psi+\re\ue\otimes\ue\nabla\psi-\rre\MT^{s}_{\e}\nabla\psi-\e\re|\ue|^{2}\ue\psi-\e\ue\psi\,dxdt\\
&\iint-2\re^{\frac{\gamma}{2}}\nabla\re^{\frac{\gamma}{2}}\psi-\e\rre\nabla^{2}\rre\nabla\psi+\e\nabla\rre\otimes\nabla\rre\nabla\psi\,dxdt\\
&\iint-\nabla\re\Delta\re\psi\re\Delta\re\nabla\psi\,dxdt+\int\rho^0u^0\psi(0)\,dx.
\end{aligned}
\end{equation}
\item Dissipation:\\
For any $\varphi\in C^{\infty}_{c}([0,T)\times\T;\R)$
\begin{equation}\label{eq:diss}
\iint\rre\MT_{\e,i,j}\varphi\,dxdt=-\iint\re\,u_{\e,i}\,\nabla_j\varphi\,dxdt-2\iint\rre\,u_{\e,i}\otimes\nabla_i\rre\varphi\,dxdt.
\end{equation}
\item Energy Estimate:
\begin{equation}\label{eq:en}
\begin{aligned}
\sup_{t\in (0,T)}&\left(\int\rn\frac{|\un|^2}{2}+\frac{\rn^{\gamma}}{\gamma-1}+\frac{|\nabla\rn|^2}{2}+\e|\nabla\rre|^2\,dx\right)\\
&+\iint|\MT^{s}_{\e}|^2\,dxdt +\e\iint\re|\ue|^4\,dxdt+\e\iint|\ue|^2\,dxdt\\
&\leq\int\rho^0\frac{|u^0|^2}{2}+\frac{{\rho^0}^{\gamma}}{\gamma-1}+\frac{|\nabla\rho^0|^2}{2}+\e|\nabla\rrho^0|^2\,dx.
\end{aligned}
\end{equation}
\item BD Entropy
\begin{equation}\label{eq:bd1}
\begin{aligned}
\sup_{t\in(0,T)}&\left(\int\rn\frac{|\wn|^2}{2}+\frac{\rn^\gamma}{\gamma-1}+\frac{|\nabla\rn|^2}{2}+(\re-\e\log\,\re)+\e|\nabla\rre|^{2}\,dx\right)\\
+\frac{4}{\gamma}&\iint|\nabla\rn^{\frac{\gamma}{2}}|^2\,dxdt
+\frac{1}{2}\iint\MT^{a}_{\e}\,dxdt+\iint|\Delta \rn|^2\,dxdt\\
\frac{\e}{C}&\iint|\nabla^2\rre|^2+|\nabla\re^{\frac{1}{4}}|^{4}\,dxdt+\e\iint\re|\ue|^4+|\ue|^2\,dxdt\\
&\leq\int\rho^0\frac{|u^0|^2}{2}+\frac{2{\rho^0}^{\gamma}}{\gamma-1}+\frac{2|\nabla\rho^0|^2}{2}+2\e|\nabla\rrho^0|^2\,dx\int\rho^0\frac{|w^0|^2}{2}\\
&+\int(\rho^0-\e\log\,\rho^0)\,dx.
\end{aligned}
\end{equation}
\end{enumerate}
\end{definition} 
Following the arguments in \cite{VY2} ad \cite{BD} it is easy to prove that there exists at least a weak solution in the sense of the Definition \ref{def:aws} so we omit the proof. In this section we prove the truncated formulation of the momentum equation.  
\begin{theorem}\label{teo:exmain}
Let $(\re,\ue,\MT_{\e})$ be a weak solution of the system \eqref{eq:app}-\eqref{eq:idapp} in the sense of Definition \ref{def:aws}. Let $\beta^{l}_{\delta}$ and $\bar{\beta}_{\lambda}$ the truncation defined in the Subsection \ref{subsec:truncations}. Then the following equalities hold.
\begin{enumerate}
\item For any $\psi\in C^{\infty}_{c}([0,T)\times\T;\R)$ 
\begin{equation}\label{eq:ren211}
\begin{aligned}
&\int\rho^{0}\beta_{\delta}^{l}(u^0)\bar{\beta}_{\la}(\rho^0)\psi(0,x)\,dx+\iint\re\bedn\beln\partial_t\psi-\iint\rn \ue\bedn\beln\cdot\nabla\psi\,dxdt\\
&-\iint\rre\MT^{s}_{\e}:\nablay\bedn\beln\otimes\nabla\psi\,dxdt-2\iint\rn^{\frac{\gamma}{2}}\nabla\rn^{\frac{\gamma}{2}}\cdot\nabla_y\bedn\beln\psi\,dxdt\\
&-\iint\nabla\rn\Delta\rn\nablay\bedn\beln\psi\,dxdt-\iint\rn\Delta\rn\nablay\bedn\beln\nabla\psi\,dxdt\\
&+\iint R^{\delta,\la}_{\e}\psi\,dxdt+\iint \tilde{R}^{\delta,\la}_{\e}\psi\,dxdt=0.
\end{aligned}
\end{equation}
where the remainders are
\begin{equation}\label{eq:remainder1}
\begin{aligned}
R^{\delta,\la}_{\e}=\sum_{i=1}^{6}R^{\delta,\la}_{\e,i}&=\rn\bedn\belnp\partial_t\rn+\rn u\bedn\belnp\nabla\rn\\
&-\rre\MT^{s}_{\e}:\nabla_{y}\bedn\otimes\nabla\rn\belnp+\rre\Delta\re\nabla_{y}^2\bedn\MT_{\e}\beln\\
\\
&+\rn\Delta\rn\nablay\bedn\belnp\nabla\rn-\MT^{s}_{\e}\MT_{\e}\nabla_{y}^2\bedn\beln.
\end{aligned}
\end{equation}
\begin{equation}\label{eq:remainder1bis}
\begin{aligned}
\tilde{R}^{\delta,\la}_{\e}=\sum_{i=1}^{6}\tilde{R}^{\delta,\la}_{\e,i}&=-\e\nabla^{2}\rrn\,\MT_{\e}\,\nabla^{2}_{y}\bedn\beln+4\e\nabla\rn^{\frac{1}{4}}\otimes\nabla\rn^{\frac{1}{4}}\,\MT_{\e}\nabla^{2}_{y}\bedn\beln\\
\\
&-\e\rrn\nabla^{2}\rrn\nabla_{y}\bedn\belnp\nabla\rn+4\e\rrn\nabla\rn^{\frac{1}{4}}\otimes\nabla\rn^{\frac{1}{4}}\bedn\belnp\nabla\rn\\
\\
&-\e\rho|u|^{2}u\nabla_{y}\bed\bel-\e u\nabla_{y}\bed\bel
\end{aligned}
\end{equation}
\bigskip

\item For any $\varphi\in C^{\infty}_{c}((0,T)\times\T;\R)$ the following (tensor) equality holds:
\begin{equation}\label{eq:dissren}
\begin{aligned}
\iint \rrn\MT_{\e}\hat{\beta}_{\delta}(\ue)\varphi\,dxdt=&-\iint \hat{\beta}_{\delta}(\ue)\rn \ue\otimes\nabla\varphi\,dxdt-\iint\rrn \ue\varphi\nablay\hat{\beta}_{\delta}(\ue)\MT_{\e}\,dxdt\\
&-2\iint\rrn \ue\otimes \nabla\rre\varphi\hat{\beta}_{\delta}(\ue)\,dxdt.
\end{aligned}
\end{equation}
\end{enumerate}
\end{theorem}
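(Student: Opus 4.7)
The plan is to justify both identities rigorously through a two-step regularization that mirrors the formal chain rule one would perform on $\re \bedn \beln \psi$ using the continuity and momentum equations of \eqref{eq:app}. The difficulty is that a weak solution in the sense of Definition \ref{def:aws} has $\ue$ only in $L^{2}_{t,x}$, with no pointwise value on the vacuum set $\{\re = 0\}$, so $\bedn$ cannot be manipulated by a direct chain rule. To fix this, I would first convolve both equations with the space--time mollifier $\Psi_r$ of Lemma \ref{lem:DPL}, yielding pointwise identities for $\overline{\re}_r$ and $\overline{\re \ue}_r$ modulo commutator errors that vanish as $r \to 0$. I would then introduce a smooth velocity proxy $v_m$ approximating $\ue$ off the vacuum region; a natural choice is a mollification of $\re \ue$ divided by a regularization of $\re$ that stays bounded away from zero, so that $v_m$ is smooth, converges to $\ue$ almost everywhere on $\{\re > 0\}$ as $r \to 0$ and $m \to \infty$, and satisfies an evolution equation obtained from the mollified momentum equation.

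For \eqref{eq:ren211} I would use the mollified continuity equation to compute $\partial_t \bar{\beta}_{\lambda}(\overline{\re}_r)$, recovering (modulo vanishing commutators from Lemma \ref{lem:DPL}) the first two remainders $R^{\delta,\lambda}_{\e,1-2}$. Using the evolution equation for $v_m$, I would then apply the chain rule to $\bedm$ and combine it with $\bar{\beta}_{\lambda}(\overline{\re}_r)\psi$, matching each term of the momentum equation of \eqref{eq:app} against either a principal term of \eqref{eq:ren211} or one of the remainders. Concretely, the viscous and quadratic convective terms produce $R^{\delta,\lambda}_{\e,3}$ and $R^{\delta,\lambda}_{\e,6}$ via second derivatives of $\beta^l_\delta$, the capillarity term $\re \nabla \Delta \re$ produces $R^{\delta,\lambda}_{\e,4-5}$, and the $\e$-dependent terms (the quantum capillarity, the damping $\e \re |\ue|^2 \ue$ and the friction $\e \ue$) yield $\tilde R^{\delta,\lambda}_{\e}$. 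After testing against $\psi$ and integrating by parts, I would pass to the limit $r \to 0$, applying Lemma \ref{lem:DPL} to discard all commutators, and then $m \to \infty$ by dominated convergence, justified by the compact support and smoothness of $\beta^l_\delta$ and $\bar{\beta}_\lambda$ together with the uniform a priori bounds \eqref{eq:en} and \eqref{eq:bd1}. Identity \eqref{eq:dissren} follows by a parallel but simpler argument: after regularization, test \eqref{eq:diss} against $\hat{\beta}_{\delta}(v_m)\varphi$, use the product rule to shift a derivative onto $\hat{\beta}_\delta$, and pass to the same limits.

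The main obstacle will be controlling the third-order capillarity and BD-type contributions during these limits. The remainders $R^{\delta,\lambda}_{\e,4-5}$ and the pieces of $\tilde R^{\delta,\lambda}_{\e}$ involving $\nabla^2 \sqrt{\re}$ are products that sit only just inside the integrability provided by \eqref{eq:en}--\eqref{eq:bd1}, and their control depends crucially on the smallness built into the truncations: the bound $\sqrt{|s|}\,\bar{\beta}_\lambda(s) \le C/\sqrt{\lambda}$ from \eqref{eq:bel}, together with $\|\nabla^2_y \beta^l_\delta\|_{L^\infty} \le C\delta$ from \eqref{eq:bed}, are what make these products integrable for fixed $\lambda$ and $\delta$. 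In addition, the convective term $\re \ue \otimes \ue$ appearing in the DiPerna--Lions commutator requires combining $\rre \ue \in L^\infty_t L^2_x$ with $\re^{1/4}\ue \in L^4_{t,x}$ from \eqref{eq:bd1}, along with the vacuum cutoff built into $v_m$, to meet the integrability hypotheses of Lemma \ref{lem:DPL}. Once all such products lie in some $L^{p_3}_{t,x}$ with $p_3 < \infty$, the commutators vanish and the double limit in $r$ and $m$ passes through cleanly, yielding both \eqref{eq:ren211} and \eqref{eq:dissren}.
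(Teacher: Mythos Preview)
Your proposal captures the correct architecture of the argument --- mollify, introduce a velocity proxy $v_m$ that is well defined off vacuum, apply the chain rule to $\beta^l_\delta(v_m)\bar\beta_\lambda(\bar\rho_r)$, remove commutators via Lemma~\ref{lem:DPL}, and pass $m\to\infty$ --- and you correctly single out the capillarity contribution and the bound $\sqrt{|s|}\,\bar\beta_\lambda(s)\le C/\sqrt\lambda$ as the crux. The paper follows the same route, but with one concrete difference in the choice of $v_m$ that matters technically.

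You propose $v_m$ as a mollification of $\re\ue$ divided by a regularization of $\re$ bounded away from zero. The paper instead sets $v_m=\phi_m(\rho)u$, where $\phi_m$ is a smooth cutoff in the \emph{density} variable, equal to $1$ on $[1/m,m]$ and vanishing outside $[1/(2m),2m]$. The payoff of this choice is the algebraic identity
\[
\sqrt{\rho}\,\nabla v_m=\phi_m(\rho)\,\MT_\e+4\,\rho\,\phi_m'(\rho)\,\nabla\rho^{1/4}\otimes\rho^{1/4}u,
\]
obtained from the dissipation relation \eqref{eq:diss}. Since $|\rho\phi_m'(\rho)|\le 2$ and $\rho\phi_m'(\rho)\to 0$ a.e., dominated convergence gives $\sqrt\rho\,\nabla v_m\to\MT_\e$ \emph{strongly} in $L^2$. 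This strong convergence is precisely what the paper uses to pass $m\to\infty$ in the dangerous term $\iint\rho\Delta\rho\,\phi_m(\rho)\,\nabla_y^2\beta^l_\delta(v_m)\,\nabla v_m\,\bar\beta_\lambda(\rho)\,\psi$, after splitting off a factor $\sqrt\rho\,\bar\beta_\lambda(\rho)\le C/\sqrt\lambda$. With your quotient-based $v_m$, the gradient $\nabla v_m$ does not decompose as cleanly: you would pick up an antisymmetric piece $(\nabla\sqrt\rho\otimes u-u\otimes\nabla\sqrt\rho)/\sqrt\rho$ on $\{\rho>1/m\}$ that has no obvious smallness, and establishing the analogue of $\sqrt\rho\,\nabla v_m\to\MT_\e$ would require a separate argument. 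The paper's $\phi_m$ also cuts off large densities, which is convenient since several bounds (e.g.\ $\phi_m(\rho)M\in L^2$) use boundedness of $\rho$ on the support of $\phi_m$.

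In short: your strategy is right, and your diagnosis of the main obstacle is accurate, but the specific proxy $v_m=\phi_m(\rho)u$ used in the paper is what makes the $m\to\infty$ step go through without extra work.
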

\begin{proof}\mbox{}
In order to avoid heavy notations in the following theorem and the proof we drop the subscript $\e$ in $(\re,\ue,\MT_{\e})$. 
\bigskip

\noindent 1) First, we define the following quantity: 
\begin{equation*}
\begin{aligned}
&M:=\rrho\MT^{s}+\e\rrho\nabla^{2}\rrho-\e\nabla\rrho\otimes\nabla\rrho,\\
&N:=\e\rho|u|^{2}u+\e u+2\rho^{\frac{\gamma}{2}}\nabla\rho^{\frac{\gamma}{2}}.
\end{aligned}
\end{equation*}
Moreover, let $\phi_m(y)$ be the function defined as follows 
\begin{equation*}
\phi_m(y)=\left\{
\begin{array}{rll}
&0 & \mbox{for $0< y\leq \frac{1}{2m}$ }, \\
\\
&2my-1 & \mbox{for $\frac{1}{2m}\leq y\leq \frac{1}{m}$ },\\
\\
&1 & \mbox{for $\frac{1}{m}\leq y\leq m$ }, \\
\\
&2-\frac{y}{m} & \mbox{for $m\leq y\leq 2m$ }, \\
\\
&0 & \mbox{for $2m\leq y$}.
\end{array}
\right.
\end{equation*}
For fixed $\e$, by using the energy estimate and the BD entropy, we have the following bounds
\begin{equation}\label{eq:exint}
\begin{aligned}
&\|\rho\|_{L^{\infty}_t(L_x^{1}\cap L^{\gamma}_x)}\leq C,\quad\|\nabla\rho\|_{L_t^{\infty}L_x^{2}}\leq C,\quad\|\nabla\rrho\|_{L_t^{\infty}L_x^{2}}\leq C, \\
&\|\nabla\rho^{\frac{\gamma}{2}}\|_{L^{2}_{t,x}}\leq C,\quad\|\e^{\frac{1}{2}}\nabla^{2}\rrho\|_{L^{2}_{t,x}}\leq C,\quad\|\e^{\frac{1}{4}}\nabla\rho^{\frac{1}{4}}\|_{L^{4}_{t,x}}\leq C,\quad \|\e^{\frac{1}{4}}\rho^{\frac{1}{4}}u\|_{L^{4}_{t,x}}\leq C,\\
&\|\rrho\,u\|_{L_t^{\infty}L_x^{2}}\leq C,\quad \|\MT\|_{L^{2}_{t,x}}\leq C,\quad\|\sqrt{\e}u\|_{L^{2}_{t,x}}\leq C,\quad\|\nabla^2\rho\|_{L^{2}_{t,x}}\leq C,
\end{aligned}
\end{equation}
where $C$ depends only on the initial data \eqref{eq:id}.
 Notice that, by using a combination of bounds in \eqref{eq:exint} and standard interpolations inequalities the following bounds hold true for fixed $\e$
 \begin{equation}\label{eq:p5}
\begin{aligned}
&\|\partial_t\rho\|_{L^{\frac{4}{3}}_{t,x}}\leq C_{\e},\quad&\|\nabla(\rho u)\|_{L^{\frac{4}{3}}_{t,x}}\leq C_{\e},\quad
&\|\nabla\rho\Delta\rho\|_{L^{\frac{5}{4}}_{t,x}}\leq C_\e,\quad
&\|\rho\Delta\rho\|_{L^{\frac{4}{3}}_{t,x}}\leq C_{\e}.
\end{aligned}
\end{equation}
Moreover, by \eqref{eq:p5} and the definition of $\phi_{m}$, for any fixed $m\in\N$ and any foxed $\e$ we have that 
\begin{equation}\label{eq:p5bis}
\begin{aligned}
&\|\phi'_{m}(\rho)M\|_{L^{2}_{t,x}}\leq C_{m,\e},\quad&\|\phi_{m}(\rho)N\|_{L^{\frac{4}{3}}_{t,x}}\leq C_{m,\e},\\
&\|\rho\Delta\rho\phi'_m(\rho)\nabla\rho\|_{L_{t,x}^{\frac{5}{4}}}\leq C_{m,\e},\quad&\|\rho\Delta\rho\phi_m(\rho)\|_{L^{2}_{t,x}}\leq C_{m,\e},\\
 &\|\phi_{m}(\rho)M\|_{L^{2}_{t,x}}\leq C_{m,\e}\,&\|\phi'(\rho)\nabla\rho\|_{L^{4}_{t,x}}\leq C_{m,\e}.
\end{aligned}
\end{equation}
Finally, define $v_m:=\phi_m(\rho)u$. By using \eqref{eq:exint} and the definition of $\phi_m$ we have that 
\begin{equation}\label{eq:estvm}
\begin{aligned}
&\|v_m\|_{L^{4}_{t,x}}\leq C_{m,\e}\, &\|\nabla v_m\|_{L^{2}_{t,x}}\leq C_{m,\e}.
\end{aligned}
\end{equation}
Consider the weak formulation of the momentum equation in Definition \ref{def:aws}, namely
\begin{equation}\label{eq:pe1}
\begin{aligned}
&\iint\rho u\partial_t\psi+\rho u\otimes u\nabla\psi-M\nabla\psi-N\psi-\nabla\rho\Delta\rho\psi\,dxdt\\
&-\iint\rho\Delta\rho\nabla\psi\,dxdt+\int\rho^0 u^0\psi(0)\,dx=0,
\end{aligned}
\end{equation}
with $\psi\in C^{\infty}_{c}((0,T)\times\T;\R)$. 
Then, the following equation for $v_{m}$ holds 
\begin{equation}\label{eq:vm}
\begin{aligned}
&\iint\rho v_m\partial_t\psi+\rho u\otimes v_m\nabla\psi-\psi\rrho\textrm{tr}(\MT)\phi'_m(\rho)\rho u\,dxdt\\
-&\iint\phi_{m}(\rho)\,M\,\nabla\psi\,dxdt-\iint M\phi'_{m}(\rho)\nabla \rho\psi\,dxdt-\iint N\phi_{m}(\rho)\psi\,dxdt\\
+&\iint\nabla\rho\Delta\rho\phi_m(\rho)\psi\,dxdt+\iint\rho\Delta\rho\phi'_m(\rho)\nabla\rho\psi\,dxdt+\iint\rho\Delta\rho\phi_m(\rho)\nabla\psi\,dxdt=0.
\end{aligned}
\end{equation}
For a full derivation of \eqref{eq:vm} we refer to \cite[Section 3.2]{LV}. The equality \eqref{eq:vm} is obtained by using $\overline{\phi_m(\rho)\psi}_{r}$ as test function in \eqref{eq:exme} and using correctly the continuity equation. By using standard properties of mollifiers and an easy combination of the bounds \eqref{eq:exint}-\eqref{eq:p5bis}, it is possible to send $r\to 0$ and the equality \eqref{eq:vm} follows easily. Although it is a very standard procedure, we show how to deal with the capillarity terms. 
Note that 
\begin{equation}\label{eq:p3}
\begin{aligned}
&\iint\nabla\rho\Delta\rho\,\overline{\phi_m(\rho)\psi}_{r}+\rho\Delta\rho\nabla[\overline{\phi_m(\rho)\psi}_{r}]\,dxdt\\
&=\iint \overline{\nabla\rho\Delta\rho}_{r}\phi_m(\rho)\psi+\overline{\rho\Delta\rho}_r\phi'_m(\rho)\nabla\rho\psi
+\overline{\rho\Delta\rho}_r\phi_m(\rho)\psi\,dxdt. 
\end{aligned}
\end{equation}
Then, by using the bounds in \eqref{eq:p5} it follows that 
\begin{equation*}
\begin{aligned}
&\iint\overline{\nabla\rho\Delta\rho}_{r}\phi_m(\rho)\psi\,dxdt\to \iint\nabla\rho\Delta\rho\phi_m(\rho)\psi\,dxdt,\\
&\iint\overline{\rho\Delta\rho}_r\phi'_m(\rho)\nabla\rho\psi\,dxdt\to \iint\rho\Delta\rho\phi'_m(\rho)\nabla\rho\psi\,dxdt,\\
&\iint\overline{\rho\Delta\rho}_r\phi_m(\rho)\psi\,dxdt\to \iint\rho\Delta\rho\phi_m(\rho)\psi\,dxdt.
 \end{aligned}
 \end{equation*}
 Next, we move to the proof of the truncated formulation of the momentum equation. Consider as test function 
 $\overline{\nabla_{y}\beta^{l}_{\delta}(\overline{v_m}_r)\psi}_r$ in \eqref{eq:vm} with $\psi\in C^{\infty}_{c}((0,T)\times\T;\R)$. By sending $r\to 0$ we get 
 \begin{equation}\label{eq:ren}
 \begin{aligned}
&\iint\rho \bedm\partial_t\psi+\rho u\bedm\nabla\psi-\rrho\textrm{tr}(\MT)\phi'_m(\rho)\rho u\nabla_y\bedm\psi\,dxdt\\
&-\iint\phi_{m}(\rho)\,M\,\nabla_y^2\bedm\nabla v_m\psi\,dxdt-\iint\phi_{m}(\rho)\,M\,\nabla_y\bedm\nabla\psi\,dxdt\\
&-\iint M\phi'_{m}(\rho)\nabla \rho\nabla_y\bedm\psi\,dxdt-\iint N\phi_{m}(\rho)\nabla_y\bedm\psi\,dxdt\\
&+\iint\nabla\rho\Delta\rho\phi_m(\rho)\nabla_y\bedm\psi\,dxdt+\iint\rho\Delta\rho\phi'_m(\rho)\nabla\rho\nabla_y\bedm\psi\,dxdt\\
&+\iint\rho\Delta\rho\phi_m(\rho)\nabla_y^2\bedm\nabla v_m\psi\,dxdt+\iint\rho\Delta\rho\phi_m(\rho)\nabla_y\bedm\nabla\psi\,dxdt=0.
\end{aligned}
\end{equation}
As before, a full derivation of the previous equality can be found in \cite[Section 3.3]{LV},
here we give the details on how to treat the transport term and the capillarity terms. 
Regarding the transport term, by using standard properties of the convolutions we have that 
\begin{equation*}
\begin{aligned}
&-\iint\rho v_m\partial_t\left[\overline{\nabla_{y}\beta^{l}_{\delta}(\overline{v_m}_r)\psi}_r\right]+\rho u\otimes v_m\nabla\left[\overline{\nabla_{y}\beta^{l}_{\delta}(\overline{v_m}_r)\psi}_r\right]\,dxdt\\
&=\iint\left[\partial_t(\overline{\rho v_m}_{r})+\dive(\overline{\rho u\otimes v_m}_r)\right]\nabla_{y}\beta^{l}_{\delta}(\overline{v_m}_r)\psi\,dxdt\\
&=\iint\left[\partial_t(\overline{\rho v_m}_{r})-\partial_t(\rho \overline{v_m}_{r})+\dive(\overline{\rho u\otimes v_m}_r)-\dive(\rho u\otimes \overline{v_m}_r)\right]\nabla_{y}\beta^{l}_{\delta}(\overline{v_m}_r)\psi\,dxdt\\
&+\iint\left[\partial_t(\rho \overline{v_m}_{r})+\dive(\rho u\otimes \overline{v_m}_r)\right]\nabla_{y}\beta^{l}_{\delta}(\overline{v_m}_r)\psi\,dxdt=I_r+II_r
\end{aligned}
\end{equation*}
The term $I_r$ is treated by using the commutator estimate of DiPerna-Lions \cite{DPL}. Indeed, by Lemma \ref{lem:DPL} part 2) with $g=\rho$ and $f=v_{i,m}$ and Lemma \ref{lem:DPL} with $B=\rho\,u$ and $f=v_{i,m}$, by using the bounds in \eqref{eq:p5} and \eqref{eq:estvm} we have that 
\begin{equation*}
I_r\to 0\textrm{ as }r\to 0. 
\end{equation*}
Regarding the second term, we first note that by \eqref{eq:p5} the continuity equation holds a.e. in $(0,T)\times\T$, then, since $\overline{v_m}_r$ is smooth, we have that 
\begin{equation*}
\begin{aligned}
II_r&=\iint\left(\partial_t(\rho \beta^{l}_{\delta}(\overline{v_m}_{r}))+\dive(\rho u\beta^{l}_{\delta} \overline{v_m}_r))\right)\psi\,dxdt\\
&-\iint\left(\rho \beta^{l}_{\delta}(\overline{v_m}_{r})\partial_t\psi+\rho u\beta^{l}_{\delta}(\overline{v_m}_r)\nabla\psi\right)\,dxdt,
\end{aligned}
\end{equation*}
which by using standard properties of convolutions and \eqref{eq:p5} converges to 
\begin{equation*}
-\iint\left(\rho \beta^{l}_{\delta}(v_m)\partial_t\psi+\rho u\beta^{l}_{\delta}(v_m)\nabla\psi\right)\,dxdt.
\end{equation*}
Regarding the capillarity terms, by standard properties of convolutions we have that  
\begin{equation*}
\begin{aligned}
&\iint\nabla\rho\Delta\rho\phi_m(\rho)\overline{\nabla_{y}\beta^{l}_{\delta}(\overline{v_m}_r)\psi}_r\,dxdt+\iint\rho\Delta\rho\phi'_m(\rho)\nabla\rho\overline{\nabla_{y}\beta^{l}_{\delta}(\overline{v_m}_r)\psi}_r\,dxdt\\
&+\iint\rho\Delta\rho\phi_m(\rho)\nabla[\overline{\nabla_{y}\beta^{l}_{\delta}(\overline{v_m}_r)\psi}_r]\,dxdt
=\iint\overline{\nabla\rho\Delta\rho\phi_m(\rho)}_{r}\nabla_{y}\beta^{l}_{\delta}(\overline{v_m}_r)\psi\,dxdt\\
&+\iint\overline{\rho\Delta\rho\phi'_m(\rho)\nabla\rho}_r\nabla_{y}\beta^{l}_{\delta}(\overline{v_m}_r)\psi\,dxdt
+\iint\overline{\rho\Delta\rho\phi_m(\rho)}_{r}\nabla^2_{y}\beta^{l}_{\delta}(\overline{v_m}_r)\nabla\overline{v_m}_r\psi\,dxdt\\
&+\iint\overline{\rho\Delta\rho\phi_m(\rho)}_{r}\nabla_{y}\beta^{l}_{\delta}(\overline{v_m}_r)\nabla\psi\,dxdt.
\end{aligned}
 \end{equation*}
Then, by using \eqref{eq:p5}-\eqref{eq:estvm} and the fact that $\beta^{l}_{\delta}\in W^{2,\infty}(\R)$, the following convergences as $r\to 0$ follow easily 
 \begin{equation*}
 \begin{aligned}
&\iint\overline{\nabla\rho\Delta\rho\phi_m(\rho)}_{r}\nabla_{y}\beta^{l}_{\delta}(\overline{v_m}_r)\psi\,dxdt\to\iint\nabla\rho\Delta\rho\phi_m(\rho)\nabla_{y}\beta^{l}_{\delta}(v_m)\psi\,dxdt,\\
&\iint\overline{\rho\Delta\rho\phi'_m(\rho)\nabla\rho}_r\nabla_{y}\beta^{l}_{\delta}(\overline{v_m}_r)\psi\,dxdt\to
\iint\rho\Delta\rho\phi'_m(\rho)\nabla\rho\nabla_{y}\beta^{l}_{\delta}(v_m)\psi\,dxdt,\\
&\iint\overline{\rho\Delta\rho\phi_m(\rho)}_{r}\nabla^2_{y}\beta^{l}_{\delta}(\overline{v_m}_r)\nabla\overline{v_m}_r\psi\,dxdt\to\iint\rho\Delta\rho\phi_m(\rho)\nabla^2_{y}\beta^{l}_{\delta}(v_m)\nabla v_m\psi\,dxdt,\\
&\iint\overline{\rho\Delta\rho\phi_m(\rho)}_{r}\nabla_{y}\beta^{l}_{\delta}(\overline{v_m}_r)\nabla\psi\,dxdt\to
\iint\rho\Delta\rho\phi_m(\rho)\nabla_{y}\beta^{l}_{\delta}(v_m)\nabla\psi\,dxdt,
\end{aligned}
\end{equation*}
where in third limit the Dominated Convergence Theorem and a possible passage to subsequence is needed.
At this point the would like to pass in the limit as $m\to\infty$, but we cannot deal with the term 
\begin{equation*}
\iint\rho\Delta\rho\phi_m(\rho)\nabla^2_{y}\beta^{l}_{\delta}(v_m)\nabla v_m\psi\,dxdt.
\end{equation*}
Before, taking the limit in $m$ we need an additional truncation in $\rho$ at a different scale. Consider $\belr\psi$ as test function in \eqref{eq:ren}
 \begin{equation*}
% \label{eq:ren1}
 \begin{aligned}
&\iint\rho \bedm\belrp\partial_t\overline{\rho}_{r}\psi\,dxdt+\iint\rho \bedm\belr\partial_t\psi\,dxdt+\iint\rho u\bedm\belrp\nabla\regrho\psi\,dxdt\\
&+\iint\rho u\bedm\belr\nabla\psi\,dxdt-\iint\rrho\textrm{tr}(\MT)\phi'_m(\rho)\rho u\nabla_{y}\bedm\belr\psi\,dxdt\\
&-\!\!\iint\phi_{m}(\rho)\,M\,\nabla_{y}^2\bedm\nabla v_m\belr\psi\,dxdt-\!\!\iint\phi_{m}(\rho)M\nabla_{y}\bedm\belrp\nabla\regrho\psi\,dxdt\\
&-\iint\phi_{m}(\rho)\,M\,\nabla_{y}\bedm\belr\nabla\psi\,dxdt-\iint M\phi'_{m}(\rho)\nabla \rho\nabla_{y}\bedm\belr\psi\,dxdt\\
&-\iint N\phi_{m}(\rho)\nabla_{y}\bedm\belr\psi\,dxdt+\iint\nabla\rho\Delta\rho\phi_m(\rho)\nabla_{y}\bedm\belr\psi\,dxdt\\
&+\iint\rho\Delta\rho\phi'_m(\rho)\nabla\rho\nabla_{y}\bedm\belr\psi\,dxdt+\iint\rho\Delta\rho\phi_m(\rho)\nabla_{y}^2\bedm\nabla v_m\belr\psi\,dxdt\\
&+\iint\rho\Delta\rho\phi_m(\rho)\nabla_{y}\bedm\belrp\nabla\regrho\psi\,dxdt+\iint\rho\Delta\rho\phi_m(\rho)\nabla_{y}\bedm\belr\nabla\psi\,dxdt=0.
\end{aligned}
\end{equation*}
By using \eqref{eq:exint}, the definition of $\phi_{m}$, the bounds \eqref{eq:p5}-\eqref{eq:estvm} and the fact the for fixed $\delta$ and fixed $\lambda$ we have that $\beta^{l}$ and $\bar{\beta}_{\delta}$ are smooth and satisfy the bounds \eqref{eq:bed} and \eqref{eq:bel} we have that as $r\to 0$, after a possible passage to subsequence, by using the Dominated Convergence Theorem the following identity holds true,
 \begin{equation}\label{eq:pren2}
 \begin{aligned}
\!\!&\iint\rho \bedm\belrhop\partial_t\rho\psi\,dxdt+\iint\rho \bedm\belrho\partial_t\psi\,dxdt\\
\!\!&+\iint\rho u\bedm\belrhop\nabla\rho\psi\,dxdt+\iint\rho u\bedm\belrho\nabla\psi\,dxdt\\
\!\!&-\iint\rrho\textrm{tr}(\MT)\phi'_m(\rho)\rho u\nabla_{y}\bedm\belrho\psi\,dxdt-\iint\phi_{m}(\rho)\,M\,\nabla_{y}^2\bedm\nabla v_m\belrho\psi\,dxdt\\
\!\!&-\iint\phi_{m}(\rho)\,M\,\nabla_{y}\bedm\belrhop\nabla\rho\psi\,dxdt-\iint\phi_{m}(\rho)\,M\,\nabla_{y}\bedm\belrho\nabla\psi\,dxdt\\
\!\!&-\iint M\phi'_{m}(\rho)\nabla \rho\nabla_{y}\bedm\belrho\psi\,dxdt-\iint N\phi_{m}(\rho)\nabla_{y}\bedm\belrho\psi\,dxdt\\
\!\!&+\iint\nabla\rho\Delta\rho\phi_m(\rho)\nabla_{y}\bedm\belrho\psi\,dxdt+\iint\rho\Delta\rho\phi'_m(\rho)\nabla\rho\nabla_{y}\bedm\belrho\psi\,dxdt\\
\!\!&+\iint\rho\Delta\rho\phi_m(\rho)\nabla_{y}^2\bedm\nabla v_m\belrho\psi\,dxdt+\iint\rho\Delta\rho\phi_m(\rho)\nabla_{y}\bedm\belrhop\nabla\rho\psi\,dxdt\\
\!\!&+\iint\rho\Delta\rho\phi_m(\rho)\nabla_{y}\bedm\belrho\nabla\psi\,dxdt=0.
\end{aligned}
\end{equation}
 Now we send $m\to\infty$. Since $\log\rho\in L^{1}_{t,x}$, it follows that $\{\rho=0\}$ has measure zero and the following convergences holds
 \begin{equation}\label{eq:p20}
 \begin{aligned}
 &\phi_m(\rho)\to\,1\,\textrm{ a.e. in }(0,T)\times\T\textrm{ and }|\phi_m(\rho)|\leq 1,\\
 & v_m\to u \textrm{ a.e. in }(0,T)\times\T,\\
 &\rho\phi'_m(\rho)\to\,0\,\textrm{ a.e. in }(0,T)\times\T\textrm{ and }|\rho\phi'_m(\rho)|\leq 2,\\
 &\rrho\nabla v_m\to \MT\textrm{ strongly in }L^{2}_{t,x}.
 \end{aligned}
 \end{equation}
We only prove that last convergence, since the others are obtained directly from the definition of $\phi_m$. By the definition of $v_m$ and $\phi_m$ we have 
\begin{equation*}
\rrho\nabla\,v_m=\nabla\left(\frac{\phi_m(\rho)}{\rrho}\rho\,u\right)-\phi_m(\rho)\nabla\rho\otimes u.
\end{equation*}
By using that 
\begin{equation*}
\nabla(\rho\,u)=\rrho\MT+2\nabla\rrho\otimes\rrho u, 
\end{equation*}
we have that 
\begin{equation*}
\rrho\nabla\,v_m=\phi_m(\rho)\MT+4\phi'_m(\rho)\rho\nabla\rho^{\frac{1}{4}}\otimes\rho^{\frac{1}{4}}u. 
\end{equation*}
Then, 
\begin{equation*}
\begin{aligned}
\|\rrho\nabla\,v_m-\MT\|_{L^{2}_{t,x}}&\leq\|(\phi_{m}(\rho)-1)\MT\|_{L^{2}_{t,x}}\\
&+4\|\phi'_m(\rho)\rho\nabla\rho^{\frac{1}{4}}\otimes\rho^{\frac{1}{4}}u\|_{L^{2}_{t,x}},
\end{aligned}
\end{equation*}
and the right-hand side goes to zero by the Dominated Convergence Theorem and \eqref{eq:p20}. 
Next, we start to analyze the terms in \eqref{eq:pren2}. By using the definition of $M$ we have that  
 \begin{equation*}
 \begin{aligned}
 \iint\phi_{m}(\rho)\,M\,\nabla_{y}^2\bedm\nabla v_m\belrho\psi\,dxdt&=\iint \phi_{m}(\rho)\rrho\MT^{s}\nabla_{y}^2\bedm\nabla v_m\belrho\psi\,dxdt\\
 &+\e\iint \phi_{m}(\rho)\rrho\nabla^{2}\rrho \nabla_{y}^2\bedm\nabla v_m\belrho\psi\,dxdt\\
 &-4\e\iint\nabla\rho^{\frac{1}{4}}\otimes\nabla\rho^{\frac{1}{4}}\nabla_{y}^2\bedm\rrho\nabla v_m\belrho\psi\,dxdt,
 \end{aligned}
 \end{equation*}
 which thanks to \eqref{eq:exint}, the Dominated Convergence Theorem and \eqref{eq:p20} converge to 
 \begin{equation*}
 \begin{aligned}
&\iint \MT^{s}\MT\nabla_{y}^2\bed\belrho\psi\,dxdt+\e\iint\nabla^{2}\rrho \nabla_{y}^2\bed\MT\belrho\psi\,dxdt\\
&-4\e\iint\nabla\rho^{\frac{1}{4}}\otimes\nabla\rho^{\frac{1}{4}}\MT\nabla_{y}^2\bed\belrho\psi\,dxdt.
 \end{aligned}
 \end{equation*}
 Then, by using the definition of $M$ we have 
 \begin{equation*}
 \begin{aligned}
 \iint\phi_{m}(\rho)\,M\,\nabla_{y}\bedm\belrhop\nabla\rho\psi\,dxdt&= \iint\phi_{m}(\rho)\,\rrho\MT^{s}\nabla_{y}\bedm\belrhop\nabla\rho\psi\,dxdt\\
 &+\e\iint\phi_{m}(\rho)\rrho\nabla^{2}\rrho\nabla_{y}\bedm\belrhop\nabla\rho\psi\,dxdt\\
 &-\e\iint\phi_{m}(\rho)\nabla\rrho\otimes\nabla\rrho\nabla_{y}\bedm\belrhop\nabla\rho\psi\,dxdt,
 \end{aligned}
 \end{equation*}
 which by \eqref{eq:exint} and the Dominated Convergence Theorem converge to 
 \begin{equation*}
 \begin{aligned}
&\iint\,\rrho\MT^{s}\nabla_{y}\bed\belrhop\nabla\rho\psi\,dxdt+\e\iint\rrho\nabla^{2}\rrho\nabla_{y}\bed\belrhop\nabla\rho\psi\,dxdt\\
 &-\e\iint\nabla\rrho\otimes\nabla\rrho\nabla_{y}\bed\belrhop\nabla\rho\psi\,dxdt.
 \end{aligned}
 \end{equation*}
 Next, again by the definition of $M$ we have that 
 \begin{equation*}
 \begin{aligned}
\iint M\phi'_{m}(\rho)\nabla \rho\nabla_{y}\bedm\belrho\psi\,dxdt&=\iint \rrho\MT^{s}\phi'_{m}(\rho)\nabla \rho\nabla_{y}\bedm\belrho\psi\,dxdt\\
&+\e\iint\rrho\nabla^{2}\rrho\phi'_{m}(\rho)\nabla \rho\nabla_{y}\bedm\belrho\psi\,dxdt\\
&-\e\iint\nabla\rrho\otimes\nabla\rrho\phi'_{m}(\rho)\nabla \rho\nabla_{y}\bedm\belrho\psi\,dxdt\\
&=2\iint \MT^{s}\rho\phi'_{m}(\rho)\nabla \rrho\nabla_{y}\bedm\belrho\psi\,dxdt\\
&+2\e\iint\nabla^{2}\rrho\rho\phi'_{m}(\rho)\nabla \rrho\nabla_{y}\bedm\belrho\psi\,dxdt\\
&-4\e\iint\nabla\rho^{\frac{1}{4}}\otimes\nabla\rho^{\frac{1}{4}}\rho\phi'_{m}(\rho)\nabla \rrho\nabla_{y}\bedm\belrho\psi\,dxdt,
 \end{aligned}
 \end{equation*}
 which thanks to \eqref{eq:exint}, \eqref{eq:p20} and the Dominated Convergence Theorem converge to $0$. Finally, we consider the term 
 \begin{equation*}
 \iint\rho\Delta\rho\phi_m(\rho)\nabla_{y}^2\bedm\nabla v_m\belrho\psi\,dxdt.
 \end{equation*}
 In order to show the convergence of this term we need to use the additional truncation of $\rho$ at height $\lambda$. We have that 
 \begin{equation*}
 \begin{aligned}
 \iint\rho\Delta\rho\phi_m(\rho)\nabla_{y}^2\bedm\nabla v_m\belrho\psi\,dxdt&=\iint\rrho\Delta\rho\phi_m(\rho)\nabla_{y}^2\bedm\MT\belrho\psi\,dxdt\\
 &+\iint(\rrho\Delta\rho\nabla_{y}^2\bedm(\rrho\nabla v_m-\MT)\belrho\psi\,dxdt.
 \end{aligned}
 \end{equation*}
 Regarding the first term, by noticing that  
 \begin{equation*}
 |\rrho\Delta\rho\phi_m(\rho)\nabla_{y}^2\bedm\MT\belrho\psi|\leq \frac{C}{\sqrt{\lambda}}(|\Delta\rho|^2+|\MT|^2),
 \end{equation*}
 we have that the Dominated Convergence Theorem implies that 
 \begin{equation*}
 \iint\rrho\Delta\rho\phi_m(\rho)\nabla_{y}^2\bedm\MT\belrho\psi\,dxdt\to \iint\rrho\Delta\rho\nabla_{y}^2\bed\MT\belrho\psi\,dxdt.
 \end{equation*}
Concerning the second one we have that 
 \begin{equation*}
 \left|\iint(\rrho\Delta\rho\nabla_{y}^2\bedm(\rrho\nabla v_m-\MT)\belrho\psi\,dxdt\right|\leq C_{\lambda, \delta, \e}\|\Delta\rho\|_{L^{2}_{t,x}}\|\rrho\nabla v_m-\MT\|_{L^{2}_{t,x}}\to 0.
 \end{equation*}
For all the other terms in \eqref{eq:pren2} the analysis of the limit as $m\to\infty$ for fixed $\e$ is a consequence of the convergences of $\phi_m$ and $v_m$, a combination of the estimates in \eqref{eq:exint} and the Dominated Convergence Theorem. The equality \eqref{eq:ren211} is then proved for any $\psi\in C^{\infty}_{c}((0,T)\times\T;\R)$. The initial data can be recovered by using the weak continuity of $\rho\,u$ in Definition \ref{def:aws}, and by considering  $\chi_{n}(t)\psi(t,x)$ as a test function, with $\psi\in C^{\infty}([0,T);C^{\infty}_{c}(\T))$ and $\chi_n$ being an approximation of he Dirac Delta in $t=0$.\\
\bigskip

\noindent 2) Multiply \eqref{eq:diss} by $\overline{\phi_{m}(\rho)\bhmr}_{r}\varphi$ with $\varphi\in C^{\infty}_{c}((0,T)\times\T;\R)$ to obtain 
\begin{equation*}
\begin{aligned}
\iint \rrho\MT\overline{\phi_{m}(\rho)\bhmr}_{r}\varphi\,dxdt=&-\iint \phi_{m}(\rho)\bhmr\,\overline{\rho u\otimes\nabla\varphi}_{r}\,dxdt\\
&-\iint\overline{\rho u\varphi}_{r}\phi'_m(\rho)\nabla\rho\bhmr\,dxdt\\
&-\iint\overline{\rho u\varphi}_{r}\phi_m(\rho)\nablay\bhmr\nabla\overline{v_m}_{r}\,dxdt\\
&-2\iint\overline{\rrho u\otimes \nabla\rrho\varphi}_{r}\phi_m(\rho)\bhmr\,dxdt.
\end{aligned}
\end{equation*}
By sending $r\to 0$ and using \eqref{eq:exint} we easily get 
\begin{equation*}
\begin{aligned}
\iint \rrho\MT\phi_{m}(\rho)\bhm\varphi\,dxdt=&-\iint \phi_{m}(\rho)\bhm\rho u\otimes\nabla\varphi\,dxdt\\
&-\iint\rho u\varphi\phi'_m(\rho)\nabla\rho\bhm\,dxdt\\
&-\iint\rho u\varphi\phi_m(\rho)\nablay\bhm\nabla v_m\,dxdt\\
&-2\iint\rrho u\otimes \nabla\rrho\varphi\phi_m(\rho)\bhm\,dxdt.
\end{aligned}
\end{equation*}
Then, by sending $m\to \infty$ and using \eqref{eq:exint} and \eqref{eq:p20} we easily get \eqref{eq:dissren}.
 \end{proof}

\section{Global existence of weak solutions}\label{sec:main}
In this Section we are going to prove the main result of our paper. 
\subsection{ Bounds independent on $\e$}
We collect the $\e$-independent bounds from \eqref{eq:en} and \eqref{eq:bd1}, which we will use in the sequence. First, we have that, for a generic constant $C>0$ independent on $\e$, the following bounds hold true: 
\begin{equation}\label{eq:ub1} 
\begin{aligned}
&\|\sqrt{\rho_\e}u_\e\|_{L^\infty_tL^2_x}\leq C,\,&\|\nabla\re\|_{L^\infty_tL^2_x}\leq C,\quad&\|\rho_\e\|_{L^\infty_t(L^1_x\cap L^\gamma_x)}\leq C,\\
&\|\MT_{\e}\|_{L^2_{t, x}}\leq C,\,&\|\nabla\rho_n^{\gamma/2}\|_{L^2_{t, x}}\leq C,\,&\|\Delta\rn\|_{L^2_{t, x}}\leq C,\\
&\|\nabla\rre\|_{L^\infty_tL^2_x}\leq C.
\end{aligned}
\end{equation}
Moreover, 
\begin{equation}\label{eq:ub4} 
\begin{aligned}
\|\rho_\e\|_{L^2_tL^\infty_x}\leq C,\quad &
\|\nabla\rho_\e\|_{L^\frac{10}{3}_{t,x}}\leq C, \quad \|\rn^{\frac{\gamma}{2}}\|_{L^{\frac{10}{3}}_{t,x}}\leq C. 
\end{aligned}
\end{equation}
By using \eqref{eq:dissor}, \eqref{eq:ub1}, \eqref{eq:ub4} we have 
\begin{equation}\label{eq:ub5}
\begin{aligned}
&\|\rn\un\|_{L^{2}_{t,x}}\leq C,\qquad&\|\nabla(\rn\un)\|_{L^{2}_{t}L^{1}_{x}}\leq C. 
\end{aligned}
\end{equation}
By using the continuity equation \eqref{eq:wfce} and \eqref{eq:ub5} we have that 
\begin{equation}\label{eq:ub6}
\|\partial_t\rn\|_{L^2_tL^1_x}\leq C.
\end{equation}
Finally, from \eqref{eq:en} we also have that
\begin{equation}\label{eq:epsbound}
\begin{aligned}
&\quad\|\e^{\frac{1}{2}}\nabla^{2}\rre\|_{L^{2}_{t,x}}\leq C,\quad&\|\e^{\frac{1}{4}}\nabla\re^{\frac{1}{4}}\|_{L^{4}_{t,x}}\leq C,\\
&\quad \|\e^{\frac{1}{4}}\re^{\frac{1}{4}}\ue\|_{L^{4}_{t,x}},\quad&\|\sqrt{\e}\ue\|_{L^{2}_{t,x}}\leq C.
\end{aligned}
\end{equation}
\subsection{Convergence Lemma}
By using the above uniform bounds we prove the following convergences. 
\begin{lemma}\label{lem:c1}
Let $\{(\rn, \un,\MT_{\e})\}_\e$ be a sequence of weak solutions of \eqref{eq:app}-\eqref{eq:idapp}. 
\begin{enumerate}
\item Up to subsequences there exist, $\rho$, $m$, $\MT$ and $\Lambda$ such that  
\begin{align}
&\rn\rightarrow\rho\textrm{ strongly in }L^{2}(0,T;H^{1}(\T)),\label{eq:strong1}\\
&\rn\un\rightarrow m\textrm{ strongly in }L^{p}(0,T;L^{p}(\T))\textrm{ with }p\in[1,2),\label{eq:strong3}\\
&\MT_{\e}\weakto \MT\textrm{ weakly in }L^{2}((0,T)\times\T),\label{eq:weakvisc}\\
&\rrn\un\weaktos\Lambda\textrm{ weakly* in }L^{\infty}(0,T;L^{2}(\T)).\label{eq:weakrru}
\end{align}
Moreover, $\Lambda$ is such that $\rrho\Lambda=m$. 
\item The following additional convergences hold true for the density
 \begin{align}
&\nabla\rrn\weakto\nabla\rrho\textrm{ weakly in }L^{2}((0,T)\times\T),\label{eq:weaknrr}\\
&\Delta\rn\weakto\Delta\rho\textrm{ weakly in }L^{2}((0,T)\times\T),\label{eq:weakdr}\\
&\rn^{\gamma}\rightarrow\rho^{\gamma}\textrm{ strongly in }L^{1}((0,T)\times\T),\label{eq:strong4}\\
&\nabla\rn^{\frac{\gamma}{2}}\weakto\nabla\rho^{\frac{\gamma}{2}}\textrm{ weakly in }L^{2}((0,T)\times\T).\label{eq:weaknp}
\end{align}
\end{enumerate}
\end{lemma}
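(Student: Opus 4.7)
The plan is to extract each listed convergence by combining the Aubin--Lions compactness lemma (for the two strong limits \eqref{eq:strong1} and \eqref{eq:strong3}) with standard weak and weak-$*$ compactness in reflexive spaces, and then to identify the weak limits by exploiting the resulting a.e.\ convergence of $\re$.

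\textbf{Density.} From \eqref{eq:ub1}, $\re$ is bounded in $L^\infty(0,T;H^1(\T))\cap L^2(0,T;H^2(\T))$, and by \eqref{eq:ub6} the time derivative $\partial_t\re$ is bounded in $L^2(0,T;L^1(\T))$. Since $H^2(\T)\hookrightarrow\hookrightarrow H^1(\T)$ compactly and $L^1(\T)\hookrightarrow H^{-s}(\T)$ continuously for $s>3/2$, Aubin--Lions yields along a subsequence the strong convergence \eqref{eq:strong1} together with pointwise a.e.\ convergence $\re\to\rho$. The weak limits \eqref{eq:weaknrr}, \eqref{eq:weakdr} and \eqref{eq:weaknp} then follow from the uniform $L^2$ bounds on $\nabla\rre$, $\Delta\re$ and $\nabla\re^{\gamma/2}$ in \eqref{eq:ub1} by weak compactness; the limits are identified as the corresponding derivatives of $\sqrt{\rho}$, $\rho$ and $\rho^{\gamma/2}$ by combining the a.e.\ convergence of $\re$ (which passes through any continuous composition) with uniqueness of distributional derivatives. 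The strong convergence \eqref{eq:strong4} is obtained from the a.e.\ convergence together with the uniform $L^\infty_tL^\gamma_x$ bound via Vitali's theorem.

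\textbf{Momentum.} The weak-$*$ convergence \eqref{eq:weakrru} is immediate from $\|\rre\,\ue\|_{L^\infty_tL^2_x}\le C$, and \eqref{eq:weakvisc} from the uniform $L^2$ bound on $\MT_\e$. For the strong convergence \eqref{eq:strong3} I again apply Aubin--Lions: by \eqref{eq:ub5}, $\re\,\ue$ is bounded in $L^2(0,T;W^{1,1}(\T))$, and $W^{1,1}(\T)\hookrightarrow\hookrightarrow L^p(\T)$ for $p<3/2$. For the time derivative I use the momentum equation \eqref{eq:exme}: every term (convective, pressure, viscosity, third-order capillarity, and the $\e$-regularized friction and Bohm-type terms) is bounded in a negative-order Sobolev space uniformly in $\e$, using \eqref{eq:ub1}--\eqref{eq:ub4} for the first four and the $\e$-weighted estimates \eqref{eq:epsbound} for the last ones. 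This produces $\partial_t(\re\ue)$ bounded in some $L^q(0,T;W^{-k,q}(\T))$ uniformly in $\e$, from which Aubin--Lions yields strong convergence of $\re\ue$ in $L^q(0,T;L^p(\T))$; interpolation with the uniform $L^2_{t,x}$ bound \eqref{eq:ub5} upgrades this to the range $p\in[1,2)$ stated in \eqref{eq:strong3}.

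\textbf{Identification $\rrho\,\Lambda=m$ and main obstacle.} Writing $\re\ue=\rre\cdot(\rre\ue)$ and using that $\rre\to\rrho$ strongly in $L^2_{t,x}$ (from \eqref{eq:strong1} together with the uniform $L^\infty_tL^\gamma_x$ bound and dominated convergence) while $\rre\ue\weakto\Lambda$ weakly in $L^2_{t,x}$, the product converges to $\rrho\,\Lambda$ in $\mathcal{D}'$; uniqueness of distributional limits then yields $m=\rrho\,\Lambda$. I expect the most delicate point of this plan to be the time-compactness of $\re\ue$, because the momentum equation \eqref{eq:exme} contains the third-order capillary term $\re\nabla\Delta\re$ and the $\e$-regularized quantum term $\e\re\nabla(\Delta\rre/\rre)$; their $\e$-scaling must be tracked with care so that \eqref{eq:epsbound} actually delivers a bound on $\partial_t(\re\ue)$ in a negative Sobolev space that is both uniform in $\e$ and strong enough to invoke compactness. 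Once this estimate is secured, the rest of the argument is routine.
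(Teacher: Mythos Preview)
Your proposal is correct and follows essentially the same route as the paper: Aubin--Lions for \eqref{eq:strong1} and \eqref{eq:strong3} (the paper places $\partial_t\re$ in $L^2_tH^{-1}_x$ via $\re\ue\in L^2_{t,x}$ and $\partial_t(\re\ue)$ in $L^2_tW^{-2,3/2}_x$, precisely the mechanism you outline), followed by standard weak compactness and the strong-times-weak product argument for $m=\rrho\Lambda$. One small fix: for \eqref{eq:strong4} the bound $\re\in L^\infty_tL^\gamma_x$ only gives $\re^\gamma\in L^\infty_tL^1_x$, which is not enough for Vitali; use instead $\re^{\gamma/2}\in L^{10/3}_{t,x}$ from \eqref{eq:ub4}, so that $\re^\gamma\in L^{5/3}_{t,x}$ and uniform integrability follows.
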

\begin{proof}
By using \eqref{eq:qns1} and \eqref{eq:ub4}, we have that
\begin{equation*}
\{\partial_t\rn\}_{\e}\textrm{ is uniformly bounded in }L^{2}(0,T;H^{-1}(\T)).
\end{equation*} 
Then, since $\{\rn\}_{\e}$ is uniformly bounded in $L^{2}(0,T;H^{2}(\T))$, by using Aubin-Lions Lemma we get \eqref{eq:strong1}. Next, by using the momentum equations and the bounds \eqref{eq:ub1}-\eqref{eq:ub4}, it is easy to prove that 
\begin{equation*}
\{\partial_t(\rn\un)\}_\e\textrm{ is uniformly bounded in }L^{2}(0,T;W^{-2,\frac{3}{2}}(\T)).
\end{equation*}
Then, by using \eqref{eq:ub4}, \eqref{eq:ub5} and Aubin-Lions Lemma, \eqref{eq:strong3} follows. The convergences \eqref{eq:weakvisc} and \eqref{eq:weakrru} follow by standard weak compactness theorems and the equality $\rrho\Lambda=m$ follows easily from \eqref{eq:strong1} and \eqref{eq:weakrru}. Next, the convergences \eqref{eq:weaknrr}, \eqref{eq:weakdr}  follow from the the uniform bounds \eqref{eq:ub1} and standard weak compactness arguments. Finally, The convergence \eqref{eq:strong4} is obtained by using \eqref{eq:strong1} and the bound \eqref{eq:ub1} and the convergence \eqref{eq:weaknp} follows by \eqref{eq:ub1} and \eqref{eq:strong1}.
\end{proof}
\begin{lemma}\label{lem:c2}
Let $f\in C\cap L^{\infty}(\R^{3};\R)$ and $(\rn, \un)$ be a solution of \eqref{eq:qns1}-\eqref{eq:qns2} and let $u$ be defined as follows: 
\begin{equation}\label{eq:defu}
u=
\left\{
\begin{array}{rll}
&\frac{m(t,x)}{\rho(t,x)}=\frac{\Lambda(t,x)}{\sqrt{\rho(t,x)}} & (t,x)\in \{\rho>0\}, \\
&0 & (t,x)\in\{\rho=0\}.
\end{array}
\right.
\end{equation}
Then, the following convergences hold. 
\begin{align}
\rn\,f(u_{\e})\to\rho\,f(u)&\textrm{ strongly in }L^{p}((0,T)\times\T)\textrm{ for any }p<6,\label{eq:convro}\\
\nabla\rn\,f(u_{\e})\to \nabla\rho\,f(u)&\textrm{ strongly in }L^{p}((0,T)\times\T)\textrm{ for any }p<\frac{10}{3},
\label{eq:convnro}\\
\rn\un\,f(\un)\to \rho u\,f(u)&\textrm{ strongly in }L^{p}((0,T)\times\T)\textrm{ for any }p<2,\label{eq:convm}\\
\rn^{\frac{\gamma}{2}}\,f(\un)\to \rho^{\frac{\gamma}{2}}\,f(u)&\textrm{ strongly in }L^{p}((0,T)\times\T)\textrm{ for any }p<\frac{10}{3}.\label{eq:convp}
\end{align}
\end{lemma}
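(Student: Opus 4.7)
The plan is to reduce each of the four convergences to a combination of (i) pointwise a.e.\ convergence of the integrand and (ii) a uniform bound in some $L^q$ norm with $q$ strictly larger than the target $p$, and then to conclude by Vitali's theorem.

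Passing to a (non-relabeled) subsequence and using \eqref{eq:strong1} and \eqref{eq:strong3}, I would first arrange that
\[
\re\to\rho,\qquad\nabla\re\to\nabla\rho,\qquad \rn\un\to m\qquad\text{a.e.\ on }(0,T)\times\T.
\]
On the non-vacuum region $\{\rho>0\}$, the ratio $\un=(\rn\un)/\re$ then converges a.e.\ to $m/\rho=u$, so continuity of $f$ yields $f(\un)\to f(u)$ a.e.\ on $\{\rho>0\}$. On $\{\rho=0\}$ there is no pointwise control on $\un$, but $|f(\un)|\le\|f\|_{L^\infty}$, and each of the four prefactors in the four products tends a.e.\ to zero there: the factors $\re$, $\rn\un$ and $\rn^{\gamma/2}$ clearly (since their a.e.\ limits $\rho$, $m=\rho u$ and $\rho^{\gamma/2}$ all vanish on $\{\rho=0\}$), and $\nabla\re$ because $\nabla\rho=0$ a.e.\ on $\{\rho=0\}$ by the classical locality property of the weak gradient of an $H^1$ function. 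Consequently each of the four products $\re f(\un)$, $\nabla\re f(\un)$, $\rn\un f(\un)$, $\rn^{\gamma/2}f(\un)$ converges a.e.\ on $(0,T)\times\T$ to the corresponding right-hand side of \eqref{eq:convro}--\eqref{eq:convp}.

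To upgrade a.e.\ convergence to strong $L^p$ convergence I would then collect the following uniform-in-$\e$ bounds: by \eqref{eq:ub1} and Sobolev embedding, $\re$ is uniformly bounded in $L^\infty_tL^6_x\subset L^6_{t,x}$; by \eqref{eq:ub4}, $\nabla\re$ and $\rn^{\gamma/2}$ are uniformly bounded in $L^{10/3}_{t,x}$; and by \eqref{eq:ub5}, $\rn\un$ is uniformly bounded in $L^2_{t,x}$. Since $\|f(\un)\|_{L^\infty}\le\|f\|_{L^\infty}$, the same uniform bounds transfer to the four products in the same spaces, and Vitali's convergence theorem converts the a.e.\ limits into strong convergence in every $L^p$ exponent below $6$, $10/3$, $2$, $10/3$ respectively, giving \eqref{eq:convro}--\eqref{eq:convp}. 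The only genuinely delicate point is the handling of the vacuum set in \eqref{eq:convnro}, which rests on the fact $\nabla\rho=0$ a.e.\ on $\{\rho=0\}$ to offset the lack of a.e.\ control on $\un$ there; the remaining steps are routine applications of the extraction of a.e.\ subsequence and Vitali's theorem.
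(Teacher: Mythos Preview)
Your proposal is correct and follows essentially the same route as the paper: extract a.e.\ convergent subsequences from \eqref{eq:strong1}--\eqref{eq:strong3}, split into $\{\rho>0\}$ and $\{\rho=0\}$, use the locality of the weak gradient for \eqref{eq:convnro}, and close with the uniform $L^q$ bounds plus Vitali. The only step the paper spells out that you leave implicit is a Fatou argument showing $m=0$ a.e.\ on $\{\rho=0\}$ (and hence $\sqrt{\rho}\,u\in L^\infty_tL^2_x$); since Lemma~\ref{lem:c1} already gives $m=\sqrt{\rho}\,\Lambda$ with $\Lambda\in L^\infty_tL^2_x$, this is in fact already available to you, so there is no actual gap.
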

\begin{proof}
We first note that, up to a subsequence non relabelled, \eqref{eq:strong1} and \eqref{eq:strong3} imply that 
\begin{equation}\label{eq:convrom}
\begin{aligned}
&\rn\to\rho\textrm{ a.e. in }(0,T)\times\T,\\
&\rn\un\to m\textrm{ a.e. in }(0,T)\times\T,\\
&\nabla\rn\to\nabla\rho\textrm{ a.e. in }(0,T)\times\T.
\end{aligned}
\end{equation}
Moreover, by Fatou Lemma we have that 
\begin{equation}
\iint\liminf_{\e\to0}\frac{m_{\e}^2}{\rn}\,dxdt\leq \liminf_{\e\to0}\iint\frac{m_{\e}^2}{\rn}<\infty,
\end{equation}
which implies that $m=0$ on $\{\rho=0\}$ and 
\begin{equation*}
\rrho\,u\in L^{\infty}(0,T;L^{2}(\T)).
\end{equation*}
Moreover, $m=\rho\,u=\rrho\Lambda$. 
Let us prove \eqref{eq:convro}. On $\{\rho>0\}$ by using \eqref{eq:convrom} we have that 
\begin{equation*}
\rn\,f(u_{\e})\to\rho\,f(u)\textrm{ a.e. in }\{\rho>0\}.
\end{equation*}
On the other hand, since $f\in L^{\infty}(\R^{3};\R)$ we have 
$$
|\rn\,f(u_{\e})|\leq |\rn|\|f\|_{\infty}\to 0\textrm{ a.e. in }\{\rho=0\}. $$
Then, $\rn\,f(u_{\e})\to\rho\,f(u)$ a.e. in $(0,T)\times\T$ and the convergence in \eqref{eq:convro} follows by the uniform bound 
\begin{equation*}
\|\rn\|_{L^{6}_{t,x}}\leq C
\end{equation*}
and Vitali's Theorem. 
Regarding \eqref{eq:convnro}, from Lemma \ref{lem:c1} we have that $\rho$ is a Sobolev function, then, see \cite{EG}, 
\begin{equation*}
\nabla\rho=0\textrm{ a.e. in }\{\rho=0\}. 
\end{equation*}
From \eqref{eq:convrom} we have that 
\begin{equation*}
\begin{aligned}
&\nabla\rn\,f(u_{\e})\to\nabla\rho\,f(u)\textrm{ a.e. in }\{\rho>0\}\\
&|\nabla\rn\,f(u_{\e})|\leq |\nabla\rn|\|f\|_{\infty}\to 0\textrm{ a.e. in }\{\rho=0\}.
\end{aligned}
\end{equation*}
Then, $\nabla\rn\,f(u_{\e})\to\nabla\rho\,f(u)$ a.e. in $(0,T)\times\T$ and \eqref{eq:convnro} follows from the uniform bound \eqref{eq:ub4} and Vitali's Theorem. Concerning \eqref{eq:convm}, again \eqref{eq:convrom} implies the following convergences
\begin{equation*}
\begin{aligned}
&\rn\un\,f(u_{\e})\to m\,f(u)\textrm{ a.e. in }\{\rho>0\},\\
&|\rn\un\,f(u_{\e})|\leq |\rn\un|\|f\|_{\infty}\to 0\textrm{ a.e. in }\{\rho=0\},
\end{aligned}
\end{equation*}
which, together with \eqref{eq:ub4} and Vitali's Theorem, imply \eqref{eq:convm}. Finally, \eqref{eq:convp} follows by the same arguments used to prove \eqref{eq:convro} and the uniform bounds on the pressure in \eqref{eq:ub1}.  
\end{proof}
\subsection{Proof of the main Theorem}
We are now ready to prove Theorem \ref{teo:main}. 
\begin{proof}[Proof of Theorem \ref{teo:main}]
Let $\{(\rn,\un,\MT_{\e})\}_{\e}$ be a sequence of weak solutions of \eqref{eq:app}-\eqref{eq:idapp}. By Lemma \ref{lem:c1} there exist $\rho$, $m$, $\Lambda$  and $\MT$ such that the convergences \eqref{eq:strong1}, \eqref{eq:strong3} and \eqref{eq:weakrru} hold. Moreover, 
by defining the velocity $u$ as in Lemma \ref{lem:c2} we have that 
\begin{equation*}
\begin{aligned}
&\rrho\,u\in L^{\infty}(0,T;L^{2}(\T),\quad &\MT\in L^{2}((0,T)\times\T),\quad
&m=\rrho\Lambda=\rho\,u. 
\end{aligned}
\end{equation*}
By using \eqref{eq:strong1}, \eqref{eq:strong3} and \eqref{eq:hyidr} is straightforward to prove that 
\begin{equation*}
\int\rn^0\phi(0,x)\,dx+\iint\rn\phi_t\,dxdt+\iint\rn\un\nabla\phi\,dxdt
\end{equation*}
converges to 
\begin{equation*}
\int\rho^0\phi(0,x)\,dx+\iint\rho\phi_t\,dxdt+\iint\rho\,u\nabla\phi\,dxdt,
\end{equation*}
for any $\phi\in C^{\infty}_{c}([0,T)\times\T)$. 
Let us consider the momentum equations.  Let $l\in\{1,2,3\}$ fixed, by using Theorem \ref{teo:exmain} we have that 
for any $\psi\in C^{\infty}_{c}([0,T)\times\T;\R)$ the following equality holds
\begin{equation}\label{eq:ren2}
\begin{aligned}
&\int\rho^{0}\beta_{\delta}^{l}(u^0)\bar{\beta}_{\la}(\rho^0)\psi(0,x)\,dx+\iint\rn\bedn\beln\partial_t\psi-\iint\rn \ue\bedn\beln\cdot\nabla\psi\,dxdt\\
&-\iint\rre\MT^{s}_{\e}:\nablay\bedn\beln\otimes\nabla\psi\,dxdt-2\iint\re^{\frac{\gamma}{2}}\nabla\re^{\frac{\gamma}{2}}\cdot\nabla_y\bedn\beln\psi\,dxdt\\
&-\iint\nabla\re\Delta\re\nablay\bedn\beln\psi\,dxdt-\iint\re\Delta\re\nablay\bedn\beln\nabla\psi\,dxdt\\
&+\iint R^{\delta,\la}_{\e}\psi\,dxdt+\iint \tilde{R}^{\delta,\la}_{\e}\psi\,dxdt=0.
\end{aligned}
\end{equation}
where the remainders are
\begin{equation*}
\begin{aligned}
R^{\delta,\la}_{\e}=\sum_{i=1}^{6}R^{\delta,\la}_{\e,i}&=\rn\bedn\belnp\partial_t\rn+\rn\un\bedn\belnp\nabla\rn\\
&-\rre\MT^{s}_e:\nabla_{y}\bedn\otimes\nabla\re\belnp+\rre\Delta\re\nabla_{y}^2\bedn\MT_{\e}\beln\\
\\
&+\rn\Delta\re\nablay\bedn\belnp\nabla\rn-\MT_{\e}^{s}\MT_{\e}\nabla_{y}^2\bedn\beln,\\
\\
\tilde{R}^{\delta,\la}_{\e}=\sum_{i=1}^{4}\tilde{R}^{\delta,\la}_{\e,i}&=-\e\nabla^{2}\rre\,\MT_{\e}\,\nabla^{2}_{y}\bedn\beln+4\e\nabla\rn^{\frac{1}{4}}\otimes\nabla\rn^{\frac{1}{4}}\,\MT\nabla^{2}_{y}\bedn\beln\\
&-\e\rre\nabla^{2}\rre\nabla_{y}\bedn\belnp\nabla\re+4\e\rre\nabla\re^{\frac{1}{4}}\otimes\nabla\re^{\frac{1}{4}}\bedn\belnp\nabla\rn\\
\\
&-\e\re|\ue|^{2}\ue\nabla_{y}\bedn\beln-\e \ue\nabla_{y}\bedn\beln.
\end{aligned}
\end{equation*}
We first perform the limit as $\e$ goes to $0$ for $\delta$ and $\la$ fixed. Notice that, since $\bar{\beta}_{\lambda}\in L^{\infty}(\R)$, and $\{\rn\}_{\e}$ converges almost everywhere, by Dominated Convergence we have that
\begin{equation}\label{eq:convrot}
\beln\to\bar{\beta}_{\lambda}(\rho)\textrm{ strongly in }L^{q}((0,T)\times\T)\textrm{ for any }q<\infty.
\end{equation}
By using \eqref{eq:convro} with $p=2$ and choosing $q=2$ in \eqref{eq:convrot} we have that 
\begin{equation*}
\iint\rn\bedn\beln\partial_t\psi\,dxdt\to\iint\rho\bed\bel\partial_t\psi\,dxdt.
\end{equation*}
Next, by \eqref{eq:convm} with $p=3/2$ and choosing $q=3$ in \eqref{eq:convrot} we get
\begin{equation*}
\iint\rn\un\bedn\beln\cdot\nabla\psi\,dxdt\to \iint\rho\,u\bed\bel\cdot\nabla\psi\,dxdt.
\end{equation*}
By using \eqref{eq:weakvisc}, \eqref{eq:convro} with $p=4$ and \eqref{eq:convrot} with $q=4$ it follows
\begin{equation*}
\iint\MT_{\e}^{s}:\rrn\nablay\bedn\beln\otimes\nabla\psi\,dxdt\to\iint\rrho\,\MT:\nablay\bed\bel\otimes\nabla\psi\,dxdt.
\end{equation*}
By using \eqref{eq:weaknp}, \eqref{eq:convp} with $p=3$ and \eqref{eq:convrot} with $q=6$ it follows
\begin{equation*}
\iint\rn^{\frac{\gamma}{2}}\nabla\rn^{\frac{\gamma}{2}}\cdot\nablay\bedn\beln\psi\,dxdt\to
\iint\rho^{\frac{\gamma}{2}}\nabla\rho^{\frac{\gamma}{2}}\cdot\nablay\bed\bel\psi\,dxdt.
\end{equation*}
By using \eqref{eq:weakdr}, \eqref{eq:convnro} with $p=3$ and \eqref{eq:convrot} with $q=6$ it follows
\begin{equation*}
\iint\nabla\rn\Delta\rn\nablay\bedn\beln\psi\,dxdt\to\iint\nabla\rho\Delta\rho\nablay\bed\bel\psi\,dxdt.
\end{equation*}
Next, by using \eqref{eq:weakdr}, \eqref{eq:convro} with $p=3$ and \eqref{eq:convrot} with $q=6$ it follows
\begin{equation*}
\iint\rn\Delta\rn\nablay\bedn\beln\nabla\psi\,dxdt\to\iint\rho\Delta\rho\nablay\bed\bel\nabla\psi\,dxdt.
\end{equation*}
It remains to study the remainders $\tilde{R}^{\delta,\la}_{\e}$ and ${R}^{\delta,\la}_{\e}$. 
Regarding  $\tilde{R}^{\delta,\la}_{\e}$ we prove the following convergence 
\begin{equation*} 
\tilde{R}^{\delta,\la}_{\e}\to 0\mbox{ in }L^{1}_{t,x}.
\end{equation*}
Indeed, by considering term by term and using the uniform bounds \eqref{eq:ub1} and \eqref{eq:epsbound} we have that 
\begin{equation*}
\begin{aligned}
&\|\tilde{R}^{\delta,\la}_{\e,1}\|_{L^{1}_{t,x}}\leq C_{\delta,\la}\sqrt{\e}\|\sqrt{\e}\nabla^{2}\rre\|_{L^{2}_{t,x}}\|\MT_{\e}\|_{L^{2}_{t,x}}\leq C_{\delta,\la}\sqrt{\e},\\
&\|\tilde{R}^{\delta,\la}_{\e,2}\|_{L^{1}_{t,x}}
\leq C_{\delta,\la}\sqrt{\e}\|\e^{\frac{1}{4}}\nabla\re^{\frac{1}{4}}\|^{2}_{L^{4}_{t,x}}\|\MT_{\e}\|_{L^{2}_{t,x}}\leq C_{\delta,\la}\sqrt{\e},\\
&\|\tilde{R}^{\delta,\la}_{\e,3}\|_{L^{1}_{t,x}}\leq C_{\delta,\la}\sqrt{\e}\|\sqrt{\e}\nabla^{2}\rre\|_{L^{2}_{t,x}}\|\nabla\re\|_{L^{2}_{t,x}}\leq C_{\delta,\la}\sqrt{\e},\\
&\|\tilde{R}^{\delta,\la}_{\e,4}\|_{L^{1}_{t,x}}\leq C_{\delta,\la}\sqrt{\e}\|\e^{\frac{1}{4}}\nabla\re^{\frac{1}{4}}\|^{2}_{L^{4}_{t,x}}\|\nabla\re\|_{L^{2}_{t,x}}\leq C_{\delta,\la}\sqrt{\e},\\
&\|\tilde{R}^{\delta,\la}_{\e,5}\|_{L^{1}_{t,x}}\leq C_{\delta,\la}\e^{\frac{1}{4}}\|\rho\|_{L^{1}_{t,x}}^{\frac{1}{4}}\|\e^{\frac{1}{4}}\re^{\frac{1}{4}}\ue\|_{L^{4}_{t,x}}^{3}\leq C_{\delta,\la}\e^{\frac{1}{4}},\\
&\|\tilde{R}^{\delta,\la}_{\e,6}\|_{L^{1}_{t,x}}\leq C_{\delta,\la}\sqrt{\e}\|\sqrt{\e}\ue\|_{L^{2}_{t,x}}\leq C_{\delta,\la}\sqrt{\e}.
\end{aligned}
\end{equation*}
Now we consider $R^{\delta,\la}_{\e}$.
We claim that there exists a $C>0$ independent on $\e$, $\delta$ and $\la$ such that 
\begin{align}
&\|R^{\delta,\la}_{\e}\|_{L^{1}_{t,x}}\leq C\left(\frac{\delta}{\sqrt{\la}}+\frac{\la}{\delta}+\la+\delta\right).\label{eq:r1}
\end{align}
In order to prove \eqref{eq:r1} we estimate all the terms in \eqref{eq:remainder1} separately. By using the uniform bounds \eqref{eq:ub1}, \eqref{eq:ub4}, \eqref{eq:ub6} and the bounds on the truncations \eqref{eq:bed} and \eqref{eq:bel} we have
\begin{equation*}
\begin{aligned}
&\|R^{\delta,\la}_{\e,1}\|_{L^{1}_{t,x}}\leq \|\rn\|_{L^{2}(L^{\infty})}\|\partial_{t}\rn\|_{L^{2}(L^{1})}\|\bedn\|_{L^{\infty}_{t,x}}
\|\belnp\|_{L^{\infty}_{t,x}}\leq C\frac{\lambda}{\delta},\\
&\|R^{\delta,\la}_{\e,2}\|_{L^{1}_{t,x}}\leq \|\rn\un\|_{L^{2}_{t,x}}\|\nabla\rn\|_{L^{2}_{t,x}}\|\bedn\|_{L^{\infty}_{t,x}}
\|\belnp\|_{L^{\infty}_{t,x}}\leq C\frac{\lambda}{\delta},\\
&\|R^{\delta,\la}_{\e,3}\|_{L^{1}_{t,x}}\leq \|\rn\|_{L^{2}(L^{\infty})}\|\MT^{s}_{\e}\|_{L^{2}_{t,x}}\|\nabla\rn\|_{L^{\infty}(L^{2})}
\|\nablay\bedn\|_{L^{\infty}_{t,x}}\|\belnp\|_{L^{\infty}_{t,x}}\leq C\la,\\
&\|R^{\delta,\la}_{\e,4}\|_{L^{1}_{t,x}}\leq \|\Delta\rn\|_{L^{2}_{t,x}}\|\MT^{s}_{\e}\|_{L^{2}_{t,x}}\|\nablay^2\bedn\|_{L^{\infty}_{t,x}}\|\rrn\beln\|_{L^{\infty}_{t,x}}\leq C\frac{\delta}{\sqrt{\la}},\\
&\|R^{\delta,\la}_{\e,5}\|_{L^{1}_{t,x}}\leq \|\rn\|_{L^{2}(L^{\infty})}\|\Delta\rn\|_{L^{2}_{t,x}}\|\nabla\rn\|_{L^{\infty}(L^{2})}
\|\nablay\bedn\|_{L^{\infty}_{t,x}}\|\belnp\|_{L^{\infty}_{t,x}}\leq C\la,\\
&\|R^{\delta,\la}_{\e,6}\|_{L^{1}_{t,x}}\leq \|\MT_{\e}\|_{L^{2}_{t,x}}^2\|\nablay^2\bedn\|_{L^{\infty}_{t,x}}\|\beln\|_{L^{\infty}_{t.x}}\leq C\delta.
\end{aligned}
\end{equation*}
Then, \eqref{eq:r1} is proved and, when $\e$ goes to $0$, we have that $(\rho, u, \MT)$ satisfies the following integral equality
\begin{equation}\label{eq:ren3}
\begin{aligned}
&\iint\rho\,u\bed\bel\cdot\nabla\psi\,dxdt-2\nu\iint\sqrt{\rho}\mathcal{S}:\nablay\bed\bel\otimes\nabla\psi\,dxdt\\
&-\iint\rho^{\frac{\gamma}{2}}\nabla\rho^{\frac{\gamma}{2}}\cdot\nablay\bed\bel\psi\,dxdt\\
&-2\kappa^2\iint\nabla\rho\Delta\rho\nablay\bed\bel\psi\,dxdt-2\kappa^2\iint\rho\Delta\rho\nablay\bed\bel\nabla\psi\,dxdt\\
&-\int\rho^{0}\beta_{\delta}^{l}(u^0)\bar{\beta}_{\la}(\rho^0)\psi(0,x)\,dx+\langle\mu^{\delta,\la},\psi\rangle=0,
\end{aligned}
\end{equation}
where  $\mu^{\delta,\la}$ is a measure such that 
\begin{equation*}
\begin{aligned}
&R^{\delta,\la}_{\e}\to\mu^{\delta,\la}\textrm{ in }\mathcal{M}(\T; \R)
\end{aligned}
\end{equation*}
and its total variations satisfies
\begin{equation}\label{eq:totalvar}
\begin{aligned}
&|\mu^{\delta,\la}|(\T)\leq C\left(\frac{\delta}{\sqrt{\la}}+\frac{\la}{\delta}+\la+\delta\right).
\end{aligned}
\end{equation}
Let $\delta=\la^{\alpha}$ with $\alpha\in (1/2,1)$, then when $\la\to0$ we have that 
\begin{equation*}
\begin{aligned}
&|\mu^{\la^{\alpha},\la}|(\T)\to 0
\end{aligned}
\end{equation*}
and by \eqref{eq:bedc}, \eqref{eq:belc} and the Dominated Convergence Theorem we have that \eqref{eq:ren3} converges to
 \begin{equation}\label{eq:ren4}
\begin{aligned}
&\int\rho^{0}\,u^{l,0}\psi(0,x)\,dx+\iint\rho\,u^{l}\partial_t\psi+\iint\rho\,u\,u^{l}\cdot\nabla\psi\,dxdt
-\iint\sqrt{\rho}\MT^{s}_{lj}\nabla_{j}\psi\,dxdt\\&-\iint\rho^{\frac{\gamma}{2}}\nabla_{l}\rho^{\frac{\gamma}{2}}\psi\,dxdt
-\iint\nabla_{l}\rho\Delta\rho\psi\,dxdt-\iint\rho\Delta\rho\nabla_{l}\psi\,dxdt=0.
\end{aligned}
\end{equation}
It remain to prove \eqref{eq:dissor}. By using Theorem \ref{teo:exmain}, Part $2)$ we have that for any $\varphi\in C^{\infty}_{c}((0,T)\times\T;\R)$ it holds that 
\begin{equation}\label{eq:dissrenp}
\begin{aligned}
\iint \rre\MT_{\e}\bhue\varphi\,dxdt=&-\iint \bhue\re\ue\otimes\nabla\varphi\,dxdt\\
&-\iint\rre \ue\varphi\nablay\bhue\MT_{\e}\,dxdt\\
-&2\iint\rre \ue\otimes \nabla\rre\varphi\bhue\,dxdt.
\end{aligned}
\end{equation}
For fixed $\delta$, by using the convergence \eqref{eq:weakvisc} and \eqref{eq:convro} with $p=4$, we have that 
\begin{equation*}
\iint\rre \bhue\MT_{\e}\varphi\,dxdt\to \iint\rrho\hat{\beta}_{\delta}(u)\MT\varphi\,dxdt.
\end{equation*}
Next, we have that 
\begin{equation*}
\begin{aligned}
\iint \bhue\re\ue\otimes\nabla\varphi \,dxdt&\to\iint \bhu\rho u\otimes\nabla\varphi \,dxdt,
\end{aligned}
\end{equation*}
because of  \eqref{eq:convm} with $p=1$. By using \eqref{eq:ybed}, \eqref{eq:convro} with $p=2$ and the weak convergence of $\nabla\rrn$ in $L^{2}_{t,x}$ we get 
\begin{equation*}
\begin{aligned}
\iint\rre \ue\otimes \nabla\rre\bhue \varphi\,dxdt&\to
\iint\rrho u\otimes \nabla\rrho\bhu \varphi\,dxdt.
\end{aligned}
\end{equation*}
Let 
\begin{equation}\label{eq:remainder2}
\bar{R}^{\delta}_{\e}=\rre \ue\varphi\nablay\bhue\MT_{\e},
\end{equation}
by using \eqref{eq:ub1} and \eqref{eq:ybed} we have that 
\begin{equation*}
\|\bar{R}^{\delta}_{n}\|_{L^{1}_{t,x}}\leq C\|\rrn\ue\|_{L^{\infty}(L^{2}_{t,x})}\|\MT_{\e}\|_{L^{2}_{t,x}}\|\nablay\hat{\beta}_{\delta}(\un)\|_{L^{\infty}_{t,x}}\leq C\delta,
\end{equation*}
and then there exists a measure $\bar{\mu}^{\delta}$ such that 
\begin{equation}\label{eq:totvar2}
\iint\bar{R}^{\delta}_{\e}\,\nabla\varphi\,dxdt
\to\langle\bar{\mu}^{\delta},\nabla\varphi\rangle,
\end{equation}
and its total variation satisfies
\begin{equation*}
|\bar{\mu}^{\delta}|(\T)\leq C\delta.
\end{equation*}
Collecting the previous convergences, we have
\begin{equation*}
\begin{aligned}
\iint\rre \bhue\MT_{\e}\varphi\,dxdt\,&=-\iint \bhu\rho u\otimes\nabla\varphi \,dxdt\\
&-2\iint\rrho u\otimes \nabla\rrho\bhu \varphi\,dxdt\\
&-\langle\bar{\mu}^{\delta},\nabla\psi\rangle.
\end{aligned}
\end{equation*}
By using \eqref{eq:bedc}, Dominated Convergence Theorem and \eqref{eq:totvar2} we get \eqref{eq:dissor}.
Finally, the energy inequality follows from the lower semicontinuity of the norms. 
\end{proof}

\end{document}